\documentclass{mfat}
\pagespan{311}{329}

\usepackage{mmap}
\usepackage[utf8]{inputenc}

\theoremstyle{plain}
\newtheorem{theorem}{Theorem}
\newtheorem{lemma}[theorem]{Lemma}
\newtheorem{proposition}[theorem]{Proposition}

\newtheorem*{corollary*}{Corollary}

\numberwithin{equation}{section} \numberwithin{theorem}{section}

\begin{document}

\title[Poisson measure as a spectral measure] {Poisson measure as a
  spectral measure of a family of commuting selfadjoint operators,
  connected with some moment problem}

\author{Yu. M. Berezansky}
\address{Institute of Mathematics, National Academy of
Sciences of Ukraine, 3 Teresh\-chenkivs'ka, Kyiv, 01601, Ukraine}
\email{berezansky.math@gmail.com}

\subjclass[2010]{Primary 44A60, 47A57} \date{20/07/2016;\ \ Revised
  16/09/2016} \dedicatory{To the memory of my dear daughter
  Natasha. She all the time stands before my eyes.} 

\keywords{Spectral measure, Poisson measure, Kondratiev-Kuna
  convolution}

\begin{abstract}
  It is proved that the Poisson measure is a spectral measure of some
  family of commuting selfadjoint operators acting on a space
  constructed from some generalization of the moment problem.
\end{abstract}

\maketitle

\section{Introduction}

In the years 1991--1998 in the works
\cite{Berez-93,Berez-Livin-Lytv-95,Lytv-95,Berez-98} the authors have
constructed a spectral theory of Jacobi fields, which was a
generalization of the spectral theory of one selfadjoint operator
generated by one Jacobi matrix to families of commuting
selfadjoint operators.  In particular, it was shown that the Poisson
measure on a definite infinite-dimensional space can be considered
as a spectral measure for some Jacobi field \cite{Berez-00}.  This
result is deeply connected with article \cite{Ito-Kubo}, see also
recent work \cite{Lytv-15}.

This article also is connected with a study of a Poisson measure as a
spectral measure, but from a different position. In the theory of
point probability processes, a procedure of constructing of Poisson
measure from Lebesgue-Poisson by means of a Kolmogorov-type extension
theorem, see \cite{Part}, is well-known. The Lebesgue-Poisson measure
is given on the space $\Gamma_0(X)$ of finite configurations of points
of the space $X$, but the Poisson measure will be a measure on the
space of infinite configurations $\Gamma(X)$.

For the space of functions on $\Gamma_0(X)$, it is possible to define
the Kondratiev-Kuna convolution $\star $, see \cite{Kondr-Kuna}, which
is a wide generalization of the classical convolution on sequences of
numbers in the theory of classical moment problem. The convolution
$\star $ generates, as in classical power case, in a natural way, a
system of commuting operators on the Hilbert space constructed by
means of a given ``moment" sequence.  It is found that the Poisson
measure will be a spectral measure for this family of operators for the
corresponding moment sequences on the space, which is wider than
$\Gamma(X)$.

It is necessary to explain what measure $\pi$ on the space
$\mathcal{D}'$ is a Poisson measure in our understanding. We assume
that for $\omega\in \mathcal{D}'$ and finite smooth functions $f$ on
$X$ there is a ``pairing", $\left\langle \omega,f\right\rangle$. Then
our measure $\pi$ is a Poisson measure if and only if for its Laplace
transform we have the identity
\begin{equation}\label{Poisson_measure}
\int_{\mathcal{D}'} e^{\left\langle \omega,f\right\rangle}d\pi(\omega)=
\exp{\left(\int_{X}(e^{f(x)}-1)d\sigma(x)\right)},
\end{equation}
where $\sigma$ is some fixed initial (``intensity") measure on $X$
and $f$ is arbitrary.

The article consists of two sections, --- Section
\ref{sect_pois_measure_classic} and Section
\ref{sect_pois_measure_spectr_measure}.  In Section
\ref{sect_pois_measure_classic} we present some results about
configurations and a classical account of introducing a Poisson
measure.  This Section is connected with the works
\cite{Part,Albeverio-98,Kondr-Kuna,Oliv-phd,Fink-doctor} and, of
course, with \cite{Berez-Tesko-16,Berez-Mierz-07,Berez-03}.
Unfortunately, I can not find books or articles that would contain
needed facts about measures and, therefore, it was necessary to write
Section~2.

In Section~\ref{sect_pois_measure_spectr_measure} we introduce a Poisson
measure as a spectral measure $\rho$.  We explain that in this case
$\rho(\Gamma_0(X))>0$.

Therefore, if we understand a Poisson measure as a measure $\pi$ on
$\mathcal{D}'$ with the Laplace transform satisfying
\eqref{Poisson_measure}, then it is possible that $\rho=\pi$ satisfies
the condition $\rho(\Gamma_0(X))>0$ and it is natural to interpret the
set $\Gamma_0(X)$ as a part of $\mathcal{D}'$.  This remark is
connected with the work \cite[p.~12]{Berez-Tesko-16}.  For more details,
see the end of Section \ref{sect_pois_measure_spectr_measure}.

It is necessary to make one refinement to the article
\cite{Berez-Tesko-16}. Namely, the definition of ``non-overlapping
configurations $\gamma$"\ (formula (2.4) in \cite{Berez-Tesko-16}) was
not explained. The results of article \cite{Berez-Tesko-16} are true,
using the notion of the set $\Gamma$ of usual configurations (see
\eqref{Gamma} in Section~\ref{sect_pois_measure_classic} of the
present article).

\section{Poisson measure. Classical account}\label{sect_pois_measure_classic}

At first we recall some definitions and notations from the article
\cite{Berez-Tesko-16}.

Let $X$ be a connected $C^{\infty}$ non-compact Riemannian manifold.
We denote by $\mathcal{D}:=C_{\rm fin}^\infty(X)$ the set of all
real-valued infinitely differentiable functions on $X$ with compact
support.  Denote by $\mathcal{D}_{\mathbb{C}}$ the complexification of
$\mathcal{D}$.  We will consider $\mathcal{D}$ as a nuclear
topological space with the projective limit topology.  Let
$\mathcal{F}_0(\mathcal{D}):=\mathbb{C}$ and
$\mathcal{F}_n(\mathcal{D}):=\mathcal{D}_{\mathbb{C}}^{\mathbin{\widehat{\otimes}}n}$,
$n\in\mathbb{N}$, i. e., it is the space of all complex-valued
symmetric infinitely differentiable functions on $X^n$ with compact
supports and corresponding to the
$\mathcal{D}_{\mathbb{C}}^{\mathbin{\widehat{\otimes}}n}$ topology
(the topology of test functions in theory of generalized functions of
variables from $X$).

Construct the space of finite sequences
\begin{equation}\label{F_fin}
\mathcal{F}_{\operatorname{fin}}(\mathcal{D}):=\bigoplus_{n=0}^\infty \mathcal{F}_n(\mathcal{D})\ni f=(f_0,f_1,\dots), \quad f_n\in \mathcal{F}_n(\mathcal{D}),
\end{equation}
i. e., sequences $f$, for which only a finite number of components $f_n$
are different from zero.  Convergence in this space is equivalent
to uniform finiteness of sequences and coordinate-wise convergence of
every coordinate $f_n$ from the space $\mathcal{F}_n(\mathcal{D})$ in
the topology mentioned above.

Let us recall the notion of the space $\Gamma=\Gamma(X)$ of all
configurations generated by $X$.  It is the set of all locally finite
subsets $\gamma$ of $X$:
\begin{equation}\label{Gamma}
\Gamma:= \left\lbrace \gamma\subset X \left|
|\gamma\cap\Lambda|<\infty \text{ for every compact } \Lambda \subset X \vphantom{\sum A_A}\right. \right\rbrace
\end{equation}
(here $|\cdot|$ is the cardinality of this set).
Each $\gamma \in \Gamma$ consists of distinct points from $X$,
and $\Gamma$ consists of all different configurations $\gamma$
(subsets of $X$).

The topology into the space $\Gamma$ is introduced in the following
way.  Consider the space $\mathcal{D}'$ of continuous linear
functionals on the space $\mathcal{D}$ and the weak topology in
$\mathcal{D}'$ (see, e. g., [6], Chap.~1, \S~1).  Let $\gamma=
[x_1,x_2,\dots,]\in\Gamma$, $x_1,x_2,\dots \in X$ be a certain
configuration.  Denote by $\omega_\gamma$ the corresponding
generalized function from $\mathcal{D}'$:
\begin{equation}\label{w_gamma}
w_\gamma(\varphi):=\left(\sum_{n=1}^\infty\delta_{x_n}\right)(\varphi)=
\sum_{n=1}^\infty\delta_{x_n}(\varphi)
=:\sum_{n=1}^\infty\varphi(x_n) =:\langle\gamma,\varphi\rangle \quad (\varphi\in \mathcal{D})
\end{equation}
(the sum in \eqref{w_gamma} is finite, since $\varphi$ is a finite
function and $x_n$ ``tends to infinity").  Thus, we have a one-to-one
correspondence $\Gamma\ni\gamma\longleftrightarrow \omega_\gamma\in
\mathcal{D}'$, and the weak topology in $\mathcal{D}'$ defines some
topology on $\Gamma$, known as the vague topology (we use this
definition, but usually instead of the space $\mathcal{D}$ we use the
space of finite continuous functions with uniformly finite
convergence).

Denote the set of all finite configurations $\xi=[x_1,\dots,x_n]$,
where $x_1,\dots,x_n\in X$, $x_j\neq x_k$ if $j\neq k$,
$n\in\mathbb{N}$, by $\Gamma_0=\Gamma_0(X)\subset\Gamma(X)=\Gamma$. We
will understand $\Gamma_0$ as a part of the space $\Gamma$ and
topologize it with the relative topology of the space $\Gamma$ (in the
second part of this Section we will introduce another topologization
of the space $\Gamma_0$). We will denote elements of the space
$\Gamma$ by $\gamma,\theta,\dots$ and elements of the space
$\Gamma_0$ by $\xi, \eta,\dots$

It is useful to do the following remarks to the notion of vague
topology.

Suppose we have a given topological space $T$ with neighborhoods $u,
v, \dots$ and some set $A\subset T$.  We introduce a topology into
$A$, using, as a system of neighborhoods, the intersections $u\cap A$,
$v\cap A$, \dots, i.~e., we introduce into $A$ the relative topology.
Let $\overline{A}$ be the closure of $A$ in $T$.  Then the relative
topology in $\overline{A}$ completely defined by the relative topology in
$A$.

In our case we introduce the relative topology into $A=\Gamma_0(X)$,
using the complete space $T=\mathcal{D}'\supset\Gamma_0(X)$.  As follows
from \eqref{w_gamma}, $\Gamma_0(X)$ is a dense set in
$\Gamma(X)=\overline{A}$ in topology of the space $\mathcal{D}'$.  As a
result, the vague topology in $\Gamma_0(X)$ completely defines the vague
topology in $\Gamma(X)$.

In other words, in definition \eqref{w_gamma} it is possible to take
$\gamma$ to be only finite configurations,
$\gamma=\xi=[x_1,\dots,x_m]\in\Gamma_0(X)$, $m\in\mathbb{N}$.  Thus,
finite configurations completely define the topology in $\Gamma(X)$.

But the one-point configurations $[x_1]$, $x_1\in X$, in principle, do
not define this topology uniquely; it is defined by finite sums of
$\delta$-functions.

It is convenient to introduce the following construction, connected
with space $\Gamma_0=\Gamma_0(X)$.  Denote by
$\Gamma^{(n)}=\Gamma^{(n)}_X$ the set of all finite configurations
using the space $X^n$, $n\in\mathbb{N}$;
$\Gamma^{(0)}=\Gamma^{(0)}_X=\varnothing$.

Namely, we understand, by $\Gamma^{(n)}$ for $n\in\mathbb{N}$, a
subset of the space $X^n=(x_1,\dots,x_n)$ of points that are symmetric
(i.~e. the point $(x_1,\dots,x_n)$ does not depend on the order of
$x_1,\dots,x_n\in X$) and their ``coordinate $x_j\in X$'' are
different.

Then the equality of disjunct summands is obvious,
\begin{equation}\label{disjunct_summands}
\Gamma_0=\Gamma_0(X)=\bigsqcup_{n=0}^\infty\Gamma^{(n)}=\bigsqcup_{n=0}^\infty\Gamma^{(n)}_X.
\end{equation}

Let us return to the definition \eqref{F_fin} of the space
$\mathcal{F}_{\operatorname{fin}}(\mathcal{D})$.  Every element of
this space is a finite vector of the following type:
\begin{equation}\label{f_in F_fin}
f=(f_0,f_1,\dots,), \quad f_n\in \mathcal{F}_n(\mathcal{D})=\mathcal{D}_{\mathbb{C}}^{\widehat{\otimes} n},
    \quad  n\in\mathbb{N}; \quad f_0\in\mathbb{C}.
\end{equation}
Fix $n\in\mathbb{N}$.  The component $f_n$ of this vector may be
understood as a smooth complex-valued finite function of point
$\xi\in\Gamma^{(n)}\subset X^n$, which is symmetric.  Conversely, such
function is always a component $f_n$ of a certain vector $f$
\eqref{f_in F_fin}.  For $n=0$ we can understand $f_0\in \mathbb{C}$
as the value in the point $\varnothing$ of such a
function.

Thus, we can understand the vector $f$ from \eqref{f_in F_fin} as a
function
\begin{equation}\label{f}
\Gamma_0=\bigsqcup_{n=0}^\infty\Gamma^{(n)}\ni\xi\mapsto f(\xi)\in\mathbb{C},
\end{equation}
where, for every $n\in\mathbb{N}$, the values of such a function
$\Gamma^{(n)}\ni\xi\mapsto f(\xi)$ are infinitely differentiable
finite symmetric function on $X^n\supset \Gamma^{(n)}$.  For $n=0$ we
have $\varnothing\mapsto f(\varnothing)\in\mathbb{C}$.  Additionally,
every function \eqref{f} must be finite in the ``direction'' $n$,
i.~e., $f(\xi)=0$, where $\xi$ belongs to
$\bigsqcup_{n=m}^\infty\Gamma^{(n)}$ for $m$ large enough.

In what follows, we will identify the vector $f$ of type \eqref{F_fin}
with a corresponding finite function $f(\xi)$ on the space \eqref{f}.
Below, as a rule, the functions of type \eqref{f} will be denoted by
$f, g, \dots:$ $\Gamma_0\ni\xi\mapsto f(\xi)\in\mathbb{C}, \dots $

The above account and formulas \eqref{F_fin}--\eqref{f} are given for
the space $X$.  But we can, instead of $X$, use its subset $Y\subset
X$, topologized with the relative topology.  Then we get the objects
$\Gamma(Y)$, $\Gamma_0(Y)$, $\Gamma^{(n)}(Y)$, $n\in\mathbb{N}_0$.

In this article, as in \cite{Berez-Tesko-16}, the Kondratiev--Kuna
convolution $\star$ \cite{Kondr-Kuna,Oliv-phd} plays an essential
role.

Let us recall the corresponding definitions.  Let us take vectors
\eqref{F_fin}, \eqref{f_in F_fin} in the form of functions on
$\Gamma_0$, $f: \Gamma_0\ni\xi\mapsto f(\xi)\in\mathbb{C}$; $f\in
\mathcal{F}_{\operatorname{fin}}(\mathcal{D})$.  For two such
functions $f$, $g$ we introduce the following convolution:
\begin{align}
(f\star g)(\xi):=\sum_{\xi'\sqcup\xi''=\,\xi}f(\xi')g(\xi'')=&
    \sum_{\xi'\sqcup\xi''\sqcup\xi'''=\,\xi} f(\xi'\sqcup\xi'')g(\xi''\sqcup\xi'''),\label{f_g_convolution}\\
    &\qquad \qquad\qquad \qquad f, g \in \mathcal{F}_{\operatorname{fin}}(\mathcal{D})\notag
\end{align}
(all sums in \eqref{f_g_convolution} are finite).  This convolutions
turns $\mathcal{F}_{\operatorname{fin}}(\mathcal{D})$ into a
commutative algebra with involution
$f=f(\xi)\rightarrow\overline{f(\xi)}=\overline{f}$ and identity
$e=(1,0,0,\dots)$; $e(\varnothing)=1$ and $e(\xi)=0$ for other
$\xi\in\Gamma_0$.  We will denote this algebra by
$\mathcal{A}:\mathcal{F}_{\operatorname{fin}}(\mathcal{D})=\mathcal{A}$.

For the algebra $\mathcal{A}$ it is natural to introduce the notion of
a character $\chi_\varphi(\xi)$.  We will call a character the
following function:
\begin{equation}\label{character_chi}
\chi_\varphi:\Gamma_0\rightarrow\mathbb{R}, \quad
    \xi \rightarrow \chi_\varphi(\xi)=\prod_{x\in\xi}\varphi(x),
    \quad \xi\in\Gamma_0\setminus \varnothing; \quad  \chi_\varphi(\varnothing) = 1,
\end{equation}
where $\varphi\in \mathcal{D}$ is given.  One may easily calculate
that
\begin{equation}\label{character_convolution}
(\chi_\varphi\star \chi_\psi)(\xi)=\chi_{\varphi+\psi+\varphi\psi}(\xi),
    \quad \xi\in\Gamma_0; \quad \varphi,\psi\in \mathcal{D}.
\end{equation}
Let us explain that, in equality \eqref{character_convolution}, we
have the additional member $\varphi+\psi$, since the algebra
$\mathcal{A}$ is an algebra with formally added identity $\varnothing$
in contrast to the ordinary discrete group algebra.

The investigation of convolution $\star $ is deeply connected with the
$K$-transform introduced on the basis of the papers of A.~Lenard by
Yu.~G.~Kondratiev and T.~Kuna in \cite{Kondr-Kuna}.

This transform is a linear operator, acting from
$\mathcal{F}_{\operatorname{fin}}(\mathcal{D})$, which is understood
as a space of functions on $\Gamma_0$, into complex-valued functions
on $\Gamma\supset\Gamma_0$,
\begin{equation}\label{K-trans}
\mathcal{F}_{\operatorname{fin}}(\mathcal{D})\ni f\mapsto(Kf)(\gamma):=\sum_{\xi\subset\gamma}f(\xi)=
    F(\gamma)\in\mathbb{C}\text{ if }\xi\neq\varnothing;
    \quad F(\varnothing) = f(\varnothing).
\end{equation}
The sum in \eqref{K-trans} is finite.  We do not repeat the
definitions, given in \cite{Berez-Tesko-16}, of the spaces between
which the operator $K$ acts.  Such an information about the spaces
under consideration will be given further in this article.

Note that that $K$-transform has an algebraically inverse operator.
Namely, it is easy to prove that if for $f\in
\mathcal{F}_{\operatorname{fin}}(\mathcal{D})$,
\begin{equation}\label{Kf=F}
(Kf)(\gamma)=F(\gamma), \quad \gamma\in \Gamma,
\end{equation}
then
\[(K^{-1}F)(\xi)=\sum_{\eta\in\xi}(-1)^{|\xi\setminus\eta|}F(\eta), \quad \xi\in \Gamma_0. \]

The $K$-transform has also one remarkable property: it maps the
algebra $\mathcal{A}$ into an algebra of functions on $\Gamma$ with
ordinary multiplication, i.~e.,
\begin{equation}\label{A_multiplication}
(K(f\star g))(\gamma)=(Kf)(\gamma)(Kg)(\gamma), \quad \gamma\in\Gamma; \quad f, g \in \mathcal{F}_{\operatorname{fin}}(\mathcal{D})=\mathcal{A}.
\end{equation}

Let us mention another property of the transform $K$, --- it transfers
the functions $f \in \mathcal{F}_{\operatorname{fin}}(\mathcal{D})$
into the functions $(Kf)(\gamma)$, $\gamma\in\Gamma$, that are
cylindrical in some sense.  Such cylindrical property explains the
fact that the inverse operator $K^{-1}$ acts only from a part of
values of a function $F(\gamma)$ (namely, only from their values on
$\Gamma_0\subset\Gamma$), see \eqref{Kf=F}.

Let us explain this assertion.  Every function $f \in
\mathcal{F}_{\operatorname{fin}}(\mathcal{D})$ has the form
\eqref{f_in F_fin} and according to \eqref{F_fin} has a finite number
of coordinates, different from zero.  In the interpretation of this
function $f$ as a function $f(\xi)$ on the space $\Gamma_0$
\eqref{disjunct_summands}, this property means that $f(\xi)$ depends
only on $\xi\in\bigsqcup_{n=0}^m\Gamma^{(n)}$ with some
$m\in\mathbb{N}_0$, i.~e., we have a function
\begin{equation}\label{f_finite_non0_coord}
\bigsqcup_{n=0}^m\Gamma^{(n)}\ni\xi\mapsto f(\xi)\in\mathbb{C}.
\end{equation}
Moreover, every function $\Gamma^{(n)}\ni\xi\mapsto f(\xi)$,
$n=1,\dots,m$, is smooth and finite on $X^n\supset\Gamma^{(n)}$.

Thus, every value in the ``point'' $\gamma$ of the described function
$(Kf)(\gamma)$ by formula \eqref{K-trans} depends only on a sum of the
values $f(\xi)$ with $\xi\subset\gamma$.  Consider the case where the
set $\gamma$ consists of infinitely many different points $x\in X$.
The sum in \eqref{K-trans} is finite and does not depend on values of
our function $\Gamma_0\ni\xi\mapsto f(\xi)\in\mathbb{C}$ in points of
$\gamma\setminus\xi$.  So, the function $(Kf)(\gamma)$ is, in some
sense, ``almost cylindrical''.  Almost, because it is necessary to
consider $\gamma=[x_1,x_2,\dots]\in\Gamma$ as a ``vector with the
coordinates $x\in X$''.  Thus, the function $(Kf)(\gamma)$, with fixed
$f$, depends only on a finite number of ``coordinates $x\in X$''.

Let us prove the following simple fact.

\begin{lemma}\label{lemma_Kf_continuous}
  Let $f \in \mathcal{F}_{\operatorname{fin}}(\mathcal{D})$.  Then the
  function $(Kf)(\gamma)$, $\gamma\in\Gamma$, is continuous in the
  vague topology.
\end{lemma}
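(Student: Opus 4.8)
The plan is to reduce the assertion to a single homogeneous component of $f$ and then to localize. Since $K$ is linear and, by \eqref{F_fin}, $f=(f_0,\dots,f_m,0,0,\dots)$ has only finitely many nonzero components, it is enough to show that the function $\gamma\mapsto(Kf_n)(\gamma)=\sum_{\xi\subset\gamma,\ |\xi|=n}f_n(\xi)$ is vaguely continuous for each fixed $n\in\mathbb{N}$ and each fixed $f_n\in\mathcal{F}_n(\mathcal{D})$ (the component $f_0$ contributes a constant). Because $f_n$ is a smooth finite symmetric function on $X^n$, its support is compact, so there is a compact set $\Lambda\subset X$ such that $f_n(x_1,\dots,x_n)=0$ as soon as some $x_j\notin\operatorname{int}\Lambda$; consequently $(Kf_n)(\gamma)=\sum_{\xi\subset\gamma\cap\Lambda,\ |\xi|=n}f_n(\xi)$ depends only on the finite set $\gamma\cap\Lambda$.

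Next I would fix $\gamma_0\in\Gamma$ and write $\gamma_0\cap\Lambda=\{a_1,\dots,a_N\}$ with distinct $a_i$ ($N=0$ being allowed). The core of the argument is the standard ``stabilization'' property of the vague topology: along every net $\gamma\to\gamma_0$ one has, eventually, $\gamma\cap\Lambda=\{b_1,\dots,b_N\}$ with $b_i\to a_i$ in $X$. Granting this, continuity of $f_n$ and the fact that the sum then runs over the fixed finite set of $n$-subsets of $\{1,\dots,N\}$ give
\[
(Kf_n)(\gamma)=\sum_{1\le i_1<\cdots<i_n\le N}f_n(b_{i_1},\dots,b_{i_n})\ \longrightarrow\ \sum_{1\le i_1<\cdots<i_n\le N}f_n(a_{i_1},\dots,a_{i_n})=(Kf_n)(\gamma_0),
\]
which is continuity of $Kf_n$ at $\gamma_0$; summing over $n$ then proves the lemma.

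Establishing the stabilization property is, I expect, the only nonroutine step. I would prove it with bump functions from $\mathcal{D}$, using the elementary bound $|\gamma\cap\{\varphi\ge1\}|\le\langle\gamma,\varphi\rangle$ for $0\le\varphi\in\mathcal{D}$ and the continuity of $\gamma\mapsto\langle\gamma,\varphi\rangle$, which is built into the definition of the vague topology through \eqref{w_gamma}. Pick pairwise disjoint relatively compact open sets $W_i\ni a_i$ lying in $\operatorname{int}\Lambda$ with $\overline{W_i}\cap\gamma_0=\{a_i\}$, and note that $\overline{\Lambda\setminus\bigcup_iW_i}$ is compact and disjoint from $\gamma_0$. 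Then: a function $\varphi$ equal to $1$ on $\overline{\Lambda\setminus\bigcup_iW_i}$ and supported in a $\gamma_0$-free neighbourhood of it forces $\gamma\cap(\Lambda\setminus\bigcup_iW_i)=\varnothing$ eventually; a function equal to $1$ on a compact neighbourhood of $\overline{W_i}$ meeting $\gamma_0$ only at $a_i$ forces $|\gamma\cap\overline{W_i}|\le1$ eventually; and a function supported in $W_i$ and equal to $1$ near $a_i$ forces $\gamma\cap W_i\ne\varnothing$ eventually. Hence eventually $\gamma\cap\Lambda$ consists of exactly one point $b_i\in W_i$ for each $i$; running the last two steps with the $W_i$ shrunk to arbitrarily small neighbourhoods of the $a_i$ yields $b_i\to a_i$. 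Since the weak topology on $\mathcal{D}'$ is in general not metrizable, all of this bookkeeping is to be done with nets rather than sequences.
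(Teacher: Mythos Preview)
Your argument is correct and follows the same underlying strategy as the paper: reduce to the finitely many homogeneous components, use that each $f_n$ has compact support so that $(Kf_n)(\gamma)$ depends only on the finitely many points of $\gamma$ lying in a fixed compact, and then invoke continuity of $f_n$ on those coordinates. The paper records the formula $(Kf)([x_1,\dots,x_n])=\sum_{\alpha\subset\{1,\dots,n\}}f(\xi_\alpha)$, notes its continuity in $(x_1,\dots,x_n)$ off the diagonal, and then simply asserts that vague continuity on $\Gamma$ follows because the far-away points of $\gamma$ can be taken as ``fixed''; your bump-function proof of the stabilization property is exactly the rigorous content behind that last, rather informal, step, so your write-up is in fact more complete than the paper's at the crucial point. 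One small technicality to tidy up: since you choose $\Lambda$ before fixing $\gamma_0$, some $a_i\in\gamma_0\cap\Lambda$ might sit on $\partial\Lambda$, and then you cannot place $W_i\subset\operatorname{int}\Lambda$ around it, so the stated stabilization of $\gamma\cap\Lambda$ can fail at the boundary. The fix is immediate---for each $\gamma_0$ pass to a slightly larger compact $\Lambda'\supset\Lambda$ with $\gamma_0\cap\partial\Lambda'=\varnothing$ (possible since $\gamma_0$ is locally finite)---and the conclusion is unaffected anyway because $f_n$ vanishes near $\partial\Lambda$.
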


\begin{proof}
  Consider fixed $\eta=[x_1,x_2,\dots]\in\Gamma$ and all
  $\xi\in\Gamma_0$, for which $\xi\subset\eta$.  We get the following
  set, which is infinite, if $\eta\in\Gamma\setminus\Gamma_0$:
\begin{equation}\label{eta}
\begin{array}{l}
\left[x_1\right],\left[x_2\right],\dots;\\
\left[x_1,x_2\right],\left[x_1,x_3\right],\dots,\left[x_2,x_3\right],\left[x_2,x_4\right],\dots;\\
\left[x_1,x_2,x_3\right],\left[x_1,x_2,x_4\right],\dots;\\
.\ .\ .\ .\ .\ .\ .\ .\ .\ .\ .\ .\ .\ .\ .\ .\ .\ .\ .\ .\ .\ .\ .\ .\ .\ .\ .\ .\ .\ .
\\
\left[x_1,x_2,\dots,x_n\right],\dots;\\
.\ .\ .\ .\ .\ .\ .\ .\ .\ .\ .\ .\ .\ .\ .\ .\ .\ .\ .\ .\ .\ .\ .\ .\ .\ .\ .\ .\ .\ .  \\
\end{array}
\end{equation}

Consider expression \eqref{K-trans}.  The function $f$ belongs to
$\mathcal{F}_{\operatorname{fin}}(\mathcal{D})$, therefore in its
representation of the form \eqref{f_finite_non0_coord} we have finite
$m\in\mathbb{N}$ and every set $\left\{\xi\in\Gamma^{(n)}\right\}$ is
precompact in the topology of the space $X^n$, $n=1,\dots,m$.

Therefore we have, for fixed
$f\in\mathcal{F}_{\operatorname{fin}}(\mathcal{D})$ and fixed
$\gamma=\left[x_1,x_2,\dots\right]\in\Gamma$,
\begin{equation}\label{Kf(gamma)}
\begin{split}
(Kf)(\gamma)=\sum_{\xi\subset\gamma}f(\xi)=
f(\varnothing)+\sum_{[x_1]\subset\gamma}f([x_1])+
\sum_{[x_1,x_2]\subset\gamma}f([x_1,x_2])\\
+\dots+\sum_{[x_1,\dots,x_n]\subset\gamma}f([x_1,\dots,x_n])+\dots
\end{split}
\end{equation}
In \eqref{Kf(gamma)} every sum in the right-hand side is finite (see
\eqref{eta}).  Moreover, the whole sum is finite, since $f$ has
representation \eqref{f_finite_non0_coord}.

Let $f\in\mathcal{F}_{\operatorname{fin}}(\mathcal{D})$ that is
understood as a finite vector of form \eqref{F_fin},
$f=(f_0,f_1,\dots)$ with components $f_n\in\Gamma^{(n)}$ (see
\eqref{f_finite_non0_coord}).  Then it is possible to calculate, using
\eqref{Kf(gamma)}, the expression $(Kf)([x_1,\dots,x_n])$,
$n\in\mathbb{N}$ (see \cite[p.~209]{Kondr-Kuna}).  Namely, denote by
$\xi_\alpha=[x_{\alpha_1},\dots,x_{\alpha_l}]\in\Gamma_0$, where
$\alpha$ is the set $\{\alpha_1,\dots,\alpha_l\}=:\alpha$ of all
different indexes from $\{1,\dots,n\}$, $1\leq l\leq n$.  Then we have
the representation of $(Kf)(\xi)$ as a finite sum,
\begin{equation}\label{Kf(xi)}
(Kf)([x_1,\dots,x_n])=\sum_{\alpha\subset\{1,\dots,n\}}f(\xi_\alpha),
\quad n\in\mathbb{N}.
\end{equation}

From \eqref{Kf(xi)} it we get a proof of the lemma.  Namely,
\eqref{Kf(xi)} means that $(Kf)(\xi)$ is continuous, when
$\xi=[x_1,\dots,x_n]$ varies over $X^n$ without a diagonal
$x_1=\dots=x_n$, since every function $f(\xi_\alpha)$ from
\eqref{Kf(xi)} has such a property in the corresponding space $X^l$,
$1\leq l\leq n$.

Configuration $\gamma\in\Gamma$ in the function $(Kf)(\gamma)$,
\eqref{Kf(gamma)}, has the form $\gamma=$
$$[x_1,\dots,x_m,x_{m+1}^0,x_{m+2}^0,\dots]$$ 
with fixed
$m\in\mathbb{N}$ and some fixed points $x_{m+1}^0,x_{m+2}^0,\dots$,
tending ``to infinity''. Therefore, the mentioned continuity of
$(Kf)(\gamma)$ gives continuity of this function in the vague
topology.
\end{proof}

We need once more to turn to the space
$\mathcal{F}_{\operatorname{fin}}(\mathcal{D})$. This is a linear
complex space with the topology described after definition
\eqref{F_fin}.  It is useful to know that this space (with its
topology) is a projective limit of some Hilbert spaces,
\begin{equation}\label{F_fin_pr_lim}
\mathcal{F}_{\operatorname{fin}}(\mathcal{D})=\operatorname{pr}\lim_{\tau\in T,\,p\geq 1} F(H_\tau,p)=
    \bigcap_{\tau\in T,\,p\geq 1}F(H_\tau,p).
\end{equation}

Here $F(H_\tau,p)$ is the weighted Fock space which consists of
sequences $f=(f_n)_{n=0}^\infty$, $f_n\in
H_{\tau,\mathbb{C}}^{\widehat{\otimes}}=:F_n(H_\tau)$ such that
\begin{equation}\label{F_norm}
\|f\|_{F(H_\tau,p)}^2=\sum_{n=0}^\infty\|f\|_{F_n(H_\tau)}^2 p_n < \infty
\end{equation}
with the corresponding scalar product.  Here $p=(p_n)_{n=0}^\infty$,
$p_n\geq 1$ means a number weight, $H_{\tau,\mathbb{C}}$ is the
complexification of the Sobolev space $W_2^{\tau_1}(X, \tau_2(x)dm(x))$,
where $\tau=(\tau_1,\tau_2(x))$, $\tau_1\in\mathbb{N}_0$,
$\tau_2(x)\geq 1$ is a $C^\infty$ weight and $m$ is Riemannian measure
on $X$.

The space $\mathcal{F}_{\operatorname{fin}}(\mathcal{D})$
\eqref{F_fin} is a nuclear space; the embedding of the spaces
$F(H_\tau,p)$ with corresponding $\tau$ and $p$ is of Hilbert-Schmidt
type.  For these notions, we refer to, e.~g., \cite{Berez-Kondr-95}.

As we have noted, the space
$\mathcal{F}_{\operatorname{fin}}(\mathcal{D})$ is a commutative
algebra $\mathcal{A}$ with respect to multiplication $\star $, is
endowed with involution ``$-$'', and has a unit element $e$.

Construct the Hilbert space connected with the algebra
$\mathcal{A}=\mathcal{F}_{\operatorname{fin}}(\mathcal{D})$.  Namely,
consider a linear functional $s\in\mathcal{A}'$, which is called
non-negative, if
\begin{equation}\label{nonnegative_functional}
s(f\star \overline{f})\geq 0, \quad f\in \mathcal{A}.
\end{equation}
Any non-negative functional $s\neq 0$ generates the following
quasi-scalar product on $\mathcal{A}$
\begin{equation}\label{quasiscalar_product}
(f,g)_{\mathcal{H}_s}=s(f\star\overline{g}), \quad f,g
\in\mathcal{A}.
\end{equation}

Identifying every $f\in \mathcal{A}$ such that $s(f\star\overline{f})=0$ with zero,
considering corresponding classes and completing the space of these classes,
we construct a Hilbert space $\mathcal{H}_s$.

In this article we will consider only the case where
\begin{equation}\label{positive_functional}
\left\{f\in\mathcal{A}\left|s(f\star\overline{f})=0\vphantom{\bigcup}\right.\right\}=0,
\end{equation}
i.~e., the positive (non-degenerate) case.  In this case
\eqref{quasiscalar_product} is a scalar product on $\mathcal{A}$ and
the completion of $\mathcal{A}$ with respect to this scalar product
gives our space $\mathcal{H}_s$.

We finish the first part of this Section, devoted to definitions and
facts, which are necessary for following account.

The second part, devoted to the classical account of a Poisson
measure, we start with a remark that, for this purpose, it is
necessary to change the topology of the space $\Gamma$, which we have
introduced above by \eqref{Gamma}.

Namely, we have $\Gamma=\Gamma_0\bigcup(\Gamma\setminus\Gamma_0)$ and
the topology of the part $\Gamma_0$ must be another one; the space
$\Gamma_0$ is represented by \eqref{disjunct_summands} as a disjoint
sum of the spaces $\Gamma^{(n)}$, $n\in\mathbb{N}_0$, each of which is
a subspace of $X^n\supset\Gamma^{(n)}$ endowed with the relative
topology; $\Gamma^{(0)}=\varnothing$. The convergence in the space
$\Gamma_0$ \eqref{disjunct_summands} is that of uniform finiteness and
coordinate-wise convergence for every coordinate $f_n$ of a vector
$f=(f_0,f_1,\dots)\in\bigsqcup_{n=0}^\infty\Gamma^{(n)}$.  We
will call this topology on $\Gamma_0$ an ``ordinary topology''.

The topology of $\Gamma\setminus\Gamma_0$ is the previous vague
topology, understanding it as the relative topology on
$\Gamma\setminus\Gamma_0\subset\Gamma$ with the vague topology
considered on $\Gamma$. Thus, we have to the end of this Section the
representation of $\Gamma$ as a union of two disjoint topological
spaces
\begin{equation}\label{Gamma_2}
\Gamma=\Gamma_0\sqcup(\Gamma\setminus\Gamma_0),
\end{equation}
where $\Gamma_0$ is topologized with the stated above ordinary
topology and $\Gamma\setminus\Gamma_0$ with the vague topology.  We
will say in this case that $\Gamma$ \eqref{Gamma_2} is topologized
with the ordinary-vague topology.

Let us introduce the classical Lebesgue-Poisson measure.  We start
with a fixed measure $\sigma$ on the $\sigma$-algebra $\mathcal{B}(X)$
of Borel sets from the space $X$ with a topology given on this space.
So we have the measure
\begin{equation}\label{sigma}
\mathcal{B}(X)\ni\alpha\mapsto\sigma(\alpha)\geq 0.
\end{equation}

This measure \eqref{sigma} must be non-degenerate, i.~e.,
$\sigma(\alpha)>0$ for every open set $\alpha\subset X$ and
non-atomic, i.~e., for every $x\in X$ $\sigma(\{x\})=0$.  Assume that
$\sigma(X) = +\infty$.  We will call this measure $\sigma$ initial or
an intensity measure.

We fix some $n\in\mathbb{N}$ and denote by $\sigma^{(n)}$ or
$\sigma^{\widehat{\otimes}n}=\sigma\times\dots\times\sigma$ ($n$
times) the symmetric tensor product of the measure $\sigma$.  This
measure $\sigma^{(n)}$ is defined on the $\sigma$-algebra of Borel
sets $\mathcal{B}(X^n)$ as follows:
\begin{equation}\label{sigma^n}
\mathcal{B}(X^n)\ni\alpha^{(n)}\mapsto\sigma^{(n)}(\alpha^{(n)}):=
    \sigma^{\widehat{\otimes}n}(\alpha^{(n)})\geq 0.
\end{equation}

Thus, this measure $\sigma^{(n)}$ is also defined on the sets from
space $\Gamma^{(n)}=\Gamma^{(n)}_X$ (see \eqref{disjunct_summands}),
which belong to $\mathcal{B}(X^n)$.

For $n=0$ we put $\sigma^{(0)}(\varnothing)\geq 0$.

The Lebesgue-Poisson measure $d\lambda(\xi)$ on every set
$\alpha\subset\Gamma_0=\Gamma_0(X)$, which is Borel with respect to the
ordinary topology, i.~e. $\alpha\in \mathcal{B}(\Gamma_0)$, is defined
by  the formula
\begin{equation}\label{B(Gamma_0)}
\begin{split}
\mathcal{B}(\Gamma_0)=\mathcal{B}(\Gamma_0(X))\ni\alpha\mapsto\lambda(\alpha)
&
=
  \sigma^{(0)}(\varnothing)+
    \frac{1}{1!}\sigma^{(1)}(\alpha^{(1)})+
    \frac{1}{2!}\sigma^{(2)}(\alpha^{(2)})
    \\
   &  +\dots+
    \frac{1}{n!}\sigma^{(n)}(\alpha^{(n)})+\dots,
\end{split}
\end{equation}
where $\alpha^{(n)}=\alpha\cap\Gamma^{(n)}$.

Thus, if we have a function $\Gamma_0\ni\xi\mapsto f(\xi)\in\mathbb{C}$,
which is integrable with respect to $d\lambda(\xi)$, then
\begin{equation}\label{int_f_on_Gamma_0}
\int_{\Gamma_0}f(\xi)d\lambda(\xi)=f(\varnothing)\sigma^{(0)}(\varnothing)+
\sum_{n=1}^\infty\frac{1}{n!}\int_{\Gamma^{(n)}}f(x_1,\dots,x_n)d\sigma^{(n)}(x_1,\dots,x_n).
\end{equation}

In what follows, we set $\sigma^{(0)}(\varnothing)=1$ in
\eqref{B(Gamma_0)}, \eqref{int_f_on_Gamma_0}. From \eqref{B(Gamma_0)},
\eqref{int_f_on_Gamma_0} it is easy to conclude that
$\lambda(\Gamma_0(Y))=e^{\sigma(Y)}$, where $Y\subset X$ is a compact
subset of $X$.

After having introduced the Lebesgue-Poisson measure, it is possible
to introduce a Poisson measure on the space $\Gamma=\Gamma(X)$.
This measure is given on the $\sigma$-algebra of Borel sets $\mathcal{B}(\Gamma(X))$
in the ordinary-vague topology.

Recall that this topology on $\Gamma(X)$ is the weak topology in
$\mathcal{D}'$, where $\mathcal{D}=C_{\rm fin}^\infty(X)$ with the
standard topology in theory of generalized functions on $X$.  The
space $\Gamma(X)$ is included into $\mathcal{D}'$ by
$\Gamma(X)\ni\gamma=[x_1,x_2,\dots]\mapsto\sum_{n=1}^\infty\delta_{x_n}$,
where $\delta_{x}$ denotes the $\delta$-function at the point $x\in X$
(see \eqref{w_gamma}).

Denote by $\mathcal{B}_c(X)$ the sets from $\mathcal{B}(X)$ with
compact closures, where $\mathcal{B}(X)$ is the $\sigma$-algebra of
Borel sets in the topology of $X$.  Let $Y\in \mathcal{B}_c(X)$ be
such \textit{fixed} set.  Consider the set of configurations
$\Gamma(Y)$.  By definition \eqref{Gamma} this set $\Gamma(Y)$
consists only of finite configurations $\gamma$, since closure $Y$
in $X$ is compact.

We introduce the vague topology in $\Gamma(Y)$ replacing the space $X$ with
$Y$, i.~e., this topology is the weak topology in the space
$\mathcal{D}'_Y$, where $\mathcal{D}_Y=C_{\rm fin}^\infty(Y)$.

It is possible to prove, that this topology is as type of topology in
the space \eqref{disjunct_summands}, i.~e.,
\begin{equation}\label{Gamma(Y)}
\Gamma(Y)=\bigsqcup_{n=0}^\infty\Gamma_Y^{(n)}=\Gamma_0(Y),
\end{equation}
where $\Gamma_Y^{(n)}$ is the set of all configurations of type
$[x_1,\dots,x_n]$, where $x_j\in Y$, $1\leq j\leq n$.  In
\eqref{Gamma(Y)} we have a disjoint sum of $\Gamma_Y^{(n)}$ with the
ordinary topology in each $\Gamma_Y^{(n)}$.  Such a result follows
from the Lemma 2.5 and its proof can be found in
\cite{Berez-Tesko-16}.

Thus, it is reasonable to write in \eqref{Gamma(Y)}, instead of
$\Gamma(Y)$, the symbol $\Gamma_0(Y)$; $Y\in \mathcal{B}_c(X)$.

Note that every function $f\in C_{\rm fin}^\infty(Y)$ ca be extended
by zero to a function on $C_{\rm fin}^\infty(X)$.  So, we have some
system of topologies on $X$, depending on $Y\in \mathcal{B}_c(X)$ (but
in this system the set $X\setminus Y$ is considered as one point of the
space).

Introduce the ``projection'' $p_Y$ of a ``point'' from $\Gamma(X)$ to
a ``point'' from $\Gamma_0(Y)$,
\begin{equation}\label{p_Y}
\Gamma(X)\supset\Gamma_0(Y), \quad \Gamma(X)\ni \gamma\mapsto
p_Y\gamma=\gamma_Y:= \gamma\bigcap Y\in \Gamma_0(Y);
\end{equation}
evidently, if $\Gamma_0(Y_1)\supset\Gamma_0(Y_2),$ $Y_2 \subset Y_1$, $Y_1, Y_2 \in \mathcal{B}_c(X)$,
then
$\Gamma_0(Y_1)\ni \gamma\mapsto p_{Y_2,Y_1}=\gamma_{Y_2}:= \gamma\cap Y_2\in \Gamma_0(Y_2)$;
$p_Y=p_{YX}.$

Denote by $p_Y^{-1}\gamma$, $\gamma\in\Gamma_0(X)$, and by
$p_{Y_2,Y_1}^{-1}\gamma$, $\gamma\in\Gamma_0(Y_1)$ the full
preimage of $p_Y$ and $p_{Y_2;Y_1}$, defined by \eqref{p_Y}. It is
evident that
\begin{equation}\label{p_Y^-1}
\begin{split}
&\Gamma_0(Y)\ni \gamma\mapsto p_Y^{-1}\gamma=\theta\in\Gamma(X)\text{ such that }\theta_Y=\gamma;\\
&\Gamma_0(Y_2)\ni \gamma\mapsto p_{Y_2,Y_1}^{-1}\gamma=\theta\in\Gamma_0(Y_1)
\text{ such that }\theta_{Y_2}=\gamma.\\
\end{split}
\end{equation}

We will apply the operators $A$ of type $p_Y$, $p_{Y_2,Y_1}$, $p_Y^{-1}$, $p_{Y_2,Y_1}^{-1}$
from \eqref{p_Y} and \eqref{p_Y^-1} to different sets of configurations $\gamma\in\alpha$.
Of course, such an application $A\alpha$ means that $A\alpha:=\{A_\gamma, \gamma\in\alpha\}$,
i.~e., the application is pointwise.

Consider an arbitrary $\Gamma_0(Y)$, where $Y\in\mathcal{B}_c(X)$ with the
corresponding ordinary topology in $\Gamma_0(Y)$ and the $\sigma$-algebra
of Borel sets $\mathcal{B}(\Gamma_0(Y))$ in this topology.  We have
$\mathcal{B}(\Gamma_0(Y))\subset \mathcal{B}(\Gamma(X))$,
$\mathcal{B}(\Gamma_0(Y_2))\subset \mathcal{B}(\Gamma_0(Y_1))$ and the
relations \eqref{p_Y^-1} can be rewritten as follows:
\begin{equation}\label{beta}
\begin{split}
&\mathcal{B}(\Gamma_0(Y))\ni\alpha\mapsto \beta\in\mathcal{B}(\Gamma(X))
\text{ such that }p_Y^{-1}\beta=:\beta_Y=\alpha;\\
&\mathcal{B}(\Gamma_0(Y_2))\ni\alpha\mapsto \beta\in\mathcal{B}(\Gamma_0(Y_1))
\text{ such that }p_{Y_2,Y_1}^{-1}\beta=:\beta_{Y_2}=\alpha.\\
\end{split}
\end{equation}

We can construct, due to \eqref{p_Y}, \eqref{p_Y^-1}, \eqref{beta}, a
projective limit $\operatorname{prlim}_Y\Gamma_0(Y)$ of such spaces
with the corresponding $\sigma$-algebras $\mathcal{B}(\Gamma_0(Y))$
(see, e.~g. \cite{Part,Fink-doctor}).

Now we will define a projective limit on $\Gamma(X)$ of probability measures on $\Gamma_0(Y)$,
where $Y\in \mathcal{B}_c(X)$ are arbitrary.
Such a family of measures $\mu^Y$ is defined on the $\sigma$-algebra $\mathcal{B}(\Gamma_0(Y))$; the
measure $\mu^Y$ must be a probability measure, i.~e., $\mu^Y(\Gamma_0(Y))=1$, $Y\in \mathcal{B}_c(X)$.
Their projective limit, $\mu^X$, is a measure on $\mathcal{B}(\Gamma(X))$
and must also be a probability, $\mu^X(\Gamma(X))=1$.

This family of measures $\mu^Y$ is supposed to be consistent in the
following sense, see \eqref{beta}:
\begin{equation}\label{mu_consistent}
\mu^{Y_2}(\alpha)=\mu^{Y_1}(p_{Y_2,Y_1}^{-1}\alpha), \quad
\alpha\in\mathcal{B}(\Gamma_0(Y_2))\subset\mathcal{B}(\Gamma_0(Y_1))
\end{equation}
for every $Y_2\subset Y_1$; $Y_1, Y_2\in \mathcal{B}_c(X)$;
i.~e., $\mu^{Y_2}(\beta_{Y_2})=\mu^{Y_1}(\beta)$, $\beta\in\mathcal{B}(\Gamma_0(Y_1))$.

For the limit measure $\mu^X$ in the projective limit, we must also have
the following property:
\begin{equation}\label{measure_prlim}
\mu^Y(\alpha)=\mu^X(p_Y^{-1}\alpha), \quad
\alpha\in\mathcal{B}(\Gamma(Y))\subset\mathcal{B}(\Gamma(X)),
\end{equation}
$Y\subset X$; $Y\in \mathcal{B}_c(X)$ (see \eqref{beta});
i.~e., $\mu^Y(\beta_Y)=\mu^{X}(\beta)$, $\beta\in\mathcal{B}(\Gamma(X))$.

The question is whether there exists such a projective limit of
measures.  The answer is given by the corresponding version of a
Kolmogorov-type theorem (see
\cite{Part,Albeverio-98,Fink-doctor,Oliv-phd}).

\begin{theorem}\label{th_proj_limit}
  Suppose that there exists a consistent family of probability
  measures $\mu^Y$, $Y\in \mathcal{B}_c(X)$, on the $\sigma$-algebras
  $\mathcal{B}(\Gamma_0(Y))$, i.~e., \eqref{mu_consistent} is
  satisfied.

  Then there exists a unique probability measure $\mu^X$ on
  $\mathcal{B}(\Gamma(X))$ such that for every $Y\in \mathcal{B}_c(X)$
  we have \eqref{measure_prlim}.
\end{theorem}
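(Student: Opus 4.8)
The plan is to reduce this to the classical Kolmogorov extension theorem for projective limits of probability measures on a projective system of measurable spaces. The key observation is that the system $\{\Gamma_0(Y)\}_{Y\in\mathcal{B}_c(X)}$ together with the maps $p_{Y_2,Y_1}$ from \eqref{p_Y} forms a projective (inverse) system indexed by the directed set $\mathcal{B}_c(X)$ (directed by inclusion, since the union of two relatively compact Borel sets is again one), and that $\Gamma(X)$ with the maps $p_Y$ is a natural candidate for its projective limit. So the first step is to verify that $\Gamma(X)$ really is (measurably isomorphic to) $\operatorname{prlim}_Y\Gamma_0(Y)$: given a thread $(\gamma_Y)_{Y\in\mathcal{B}_c(X)}$ with $p_{Y_2,Y_1}\gamma_{Y_1}=\gamma_{Y_2}$ for $Y_2\subset Y_1$, one recovers a unique $\gamma\in\Gamma(X)$ by setting $\gamma\cap Y:=\gamma_Y$; local finiteness of $\gamma$ is automatic because each $\gamma_Y$ is a finite configuration and every compact $\Lambda\subset X$ lies in some $Y\in\mathcal{B}_c(X)$ (e.g. $Y$ a relatively compact open neighbourhood of $\Lambda$). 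Conversely $\gamma\mapsto(\gamma\cap Y)_Y$ is the inverse, and by \eqref{beta} both maps are measurable with respect to $\mathcal{B}(\Gamma(X))$ and the projective-limit $\sigma$-algebra $\bigvee_Y p_Y^{-1}\mathcal{B}(\Gamma_0(Y))$; in fact one should check these two $\sigma$-algebras coincide, i.e. $\mathcal{B}(\Gamma(X))$ is generated by the cylinder sets $p_Y^{-1}\alpha$, which follows from the fact (established earlier in the Section) that the vague topology on $\Gamma(X)$ is the one determined by the finite configurations, hence by the $p_Y$'s.

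The second step is the construction of the set function $\mu^X$ on cylinder sets. For $\beta\in\mathcal{B}(\Gamma(X))$ of the form $\beta=p_Y^{-1}\alpha$ with $\alpha\in\mathcal{B}(\Gamma_0(Y))$, define $\mu^X(\beta):=\mu^Y(\alpha)$. One must check this is well defined: if $p_{Y_1}^{-1}\alpha_1=p_{Y_2}^{-1}\alpha_2$, pick $Y_3\supset Y_1\cup Y_2$ in $\mathcal{B}_c(X)$ and use the consistency \eqref{mu_consistent} together with $p_{Y_i}=p_{Y_i,Y_3}\circ p_{Y_3}$ to see $\mu^{Y_1}(\alpha_1)=\mu^{Y_3}(p_{Y_1,Y_3}^{-1}\alpha_1)=\mu^{Y_3}(p_{Y_2,Y_3}^{-1}\alpha_2)=\mu^{Y_2}(\alpha_2)$. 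Thus $\mu^X$ is a finitely additive probability set function on the algebra $\mathcal{C}:=\bigcup_Y p_Y^{-1}\mathcal{B}(\Gamma_0(Y))$ of cylinder sets (finite additivity: given finitely many disjoint cylinders, lift them all to a common $Y$ and use additivity of $\mu^Y$).

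The third and decisive step is countable additivity of $\mu^X$ on the algebra $\mathcal{C}$, after which Carathéodory's extension theorem yields a unique probability measure on $\sigma(\mathcal{C})=\mathcal{B}(\Gamma(X))$ satisfying \eqref{measure_prlim}, and uniqueness follows from the $\pi$–$\lambda$ theorem since $\mathcal{C}$ is a generating algebra. This is the main obstacle, and it is exactly the point where one cannot get away with pure set theory: one must invoke a regularity/compactness property of the component spaces $\Gamma_0(Y)$. The standard route is to show that each $(\Gamma_0(Y),\mathcal{B}(\Gamma_0(Y)),\mu^Y)$ is a \emph{Radon} (or at least perfect / standard Borel) measure space — here one uses that $\Gamma_0(Y)=\bigsqcup_{n=0}^\infty\Gamma_Y^{(n)}$ with $\Gamma_Y^{(n)}$ a Borel subset of the Polish space $Y^n$ (recall $Y$ has compact closure in the manifold $X$, hence its closure is a compact metrizable space), so $\Gamma_0(Y)$ is a Polish space and $\mu^Y$ is automatically Radon. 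Then the Prokhorov–Kolmogorov argument applies: if $(\beta_k)\subset\mathcal{C}$ decreases to $\varnothing$ but $\inf_k\mu^X(\beta_k)=\varepsilon>0$, write $\beta_k=p_{Y_k}^{-1}\alpha_k$ with an increasing sequence $Y_k$, use inner regularity to choose compact $K_k\subset\alpha_k$ in $\Gamma_0(Y_k)$ with $\mu^{Y_k}(\alpha_k\setminus K_k)<\varepsilon 2^{-k-1}$, replace $\beta_k$ by the smaller cylinder $\widetilde\beta_k:=\bigcap_{j\le k}p_{Y_j}^{-1}K_j$ (still of $\mu^X$-measure $\ge\varepsilon/2$, hence nonempty), pick $\gamma^{(k)}\in\widetilde\beta_k$, and extract via a diagonal argument over the projective system a thread lying in every $\widetilde\beta_k\subset\beta_k$, contradicting $\bigcap_k\beta_k=\varnothing$. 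I would either carry out this compactness argument explicitly or, more economically, cite the version of the Kolmogorov extension theorem for projective limits along a directed index set in \cite{Part,Fink-doctor,Oliv-phd} once the measurable-isomorphism statement $\Gamma(X)\cong\operatorname{prlim}_Y\Gamma_0(Y)$ of the first two steps is in place; that citation is precisely what the sentence preceding the theorem statement promises. The only genuinely nontrivial input beyond bookkeeping is the Polishness/Radon property of the $\Gamma_0(Y)$, which is why the hypotheses on $X$ (a $C^\infty$ Riemannian manifold, so locally compact and second countable) and on $\sigma$ (non-atomic, locally finite) were imposed.
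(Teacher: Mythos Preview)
The paper does not actually prove this theorem: it is stated as a known Kolmogorov-type result and the reader is referred to \cite{Part,Albeverio-98,Fink-doctor,Oliv-phd} for a proof. Your proposal is a correct and careful sketch of exactly the standard argument that those references carry out --- identification of $\Gamma(X)$ with the projective limit, definition of $\mu^X$ on the cylinder algebra with well-definedness via consistency, and the Prokhorov--Kolmogorov inner-regularity argument for countable additivity using Polishness of each $\Gamma_0(Y)$ --- so there is nothing to compare against beyond noting that your approach matches what the cited sources do.

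One small inaccuracy in your closing remark: the hypotheses on the intensity measure $\sigma$ (non-atomic, locally finite) play no role in Theorem~\ref{th_proj_limit} itself, which is a statement about an arbitrary consistent family $(\mu^Y)_Y$; those conditions on $\sigma$ enter only later, in Theorem~\ref{th_Poisson_measure}, when one checks that the particular family $\pi^Y=e^{-\sigma(Y)}\lambda$ is consistent. What is genuinely needed here is only the local compactness and second countability of $X$, which makes each $\Gamma_0(Y)$ Polish.
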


After these general results about the projective limit of the spaces
$\Gamma(Y)$ endowed with corresponding Borel $\sigma$-algebras
$\mathcal{B}(\Gamma_0(Y))$, $Y\in \mathcal{B}_c(X)$, we can introduce
a Poisson measure with the a use of the Lebesgue-Poisson measure.

\begin{theorem}\label{th_Poisson_measure}
  Consider $Y\in \mathcal{B}_c(X)$, and the probability measures 
\begin{equation}\label{prob_measures}
\pi^Y(\alpha)=e^{-\sigma(Y)}\lambda(\alpha), \quad \alpha\in\mathcal{B}(\Gamma_0(Y)),
\end{equation}
on the $\sigma$-algebra $\mathcal{B}(\Gamma_0(Y))$, where
$d\lambda(\xi)$ is Lebesgue-Poisson measure \eqref{B(Gamma_0)} on
$\Gamma_0(Y)$ with the initial measure $\sigma$ \eqref{sigma} given on
$\mathcal{B}(X)$.

Using the above mentioned Theorem~\ref{th_proj_limit} we can conclude
that, on $\mathcal{B}(\Gamma(X))$, there exists a unique probability
measure $\pi$, for which
\begin{equation}\label{limit_measure_pi}
\begin{split}
\pi^Y(\alpha)=\pi(p_Y^{-1}\alpha), \quad \alpha\in\mathcal{B}(\Gamma(Y)), \quad Y\in \mathcal{B}_c(X); \\
\quad \text{ i. e. } \pi^Y(\beta_Y)=\pi(\beta), \quad \beta \in \mathcal{B}(\Gamma(X)),
\end{split}
\end{equation}
where the projection $p_Y$ is given by \eqref{p_Y}, \eqref{p_Y^-1},
\eqref{beta}.  Such a measure is called a Poisson measure.
\end{theorem}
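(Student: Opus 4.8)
The plan is to apply Theorem~\ref{th_proj_limit} to the family $\{\pi^Y\}_{Y\in\mathcal{B}_c(X)}$: once its hypotheses are checked, both the existence and the uniqueness of $\pi$ follow at once, with $\pi:=\mu^X$. That each $\pi^Y$ is a probability measure on $\mathcal{B}(\Gamma_0(Y))$ is immediate: $Y$ has compact closure, so $\sigma(Y)<\infty$, and by the identity $\lambda(\Gamma_0(Y))=e^{\sigma(Y)}$ recorded right after \eqref{int_f_on_Gamma_0} we get $\pi^Y(\Gamma_0(Y))=e^{-\sigma(Y)}\lambda(\Gamma_0(Y))=1$, while $\pi^Y\geq 0$ and countable additivity are inherited from $\lambda$. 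So the real content is the consistency relation \eqref{mu_consistent}.

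Fix $Y_2\subset Y_1$ in $\mathcal{B}_c(X)$, put $Z:=Y_1\setminus Y_2$, let $\alpha\in\mathcal{B}(\Gamma_0(Y_2))$, and write $\lambda_Y$ for the Lebesgue--Poisson measure on $\Gamma_0(Y)$. The key ingredient is the multiplicativity of the Lebesgue--Poisson measure under a disjoint splitting of the base space: the map $\Gamma_0(Y_2)\times\Gamma_0(Z)\ni(\xi',\xi'')\mapsto\xi'\sqcup\xi''\in\Gamma_0(Y_1)$ is a Borel isomorphism, with inverse $\xi\mapsto(\xi\cap Y_2,\xi\cap Z)$ (the two pieces are configurations in disjoint subsets of $X$, so their union is again a configuration), and it pushes the product measure $\lambda_{Y_2}\otimes\lambda_Z$ forward onto $\lambda_{Y_1}$. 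To verify the latter I would argue level by level via \eqref{B(Gamma_0)}: a point of $\Gamma^{(n)}_{Y_1}$ splits uniquely into a point of $\Gamma^{(k)}_{Y_2}$ and a point of $\Gamma^{(n-k)}_{Z}$ for some $0\leq k\leq n$; the $k$-th piece of $\Gamma^{(n)}_{Y_1}$ is carried by $\binom{n}{k}\,\sigma^{(k)}\otimes\sigma^{(n-k)}$, the binomial factor counting the ways of choosing which of the $n$ ``coordinates'' lie in $Y_2$; and the normalizing weights combine exactly through $\tfrac{1}{n!}\binom{n}{k}=\tfrac{1}{k!}\cdot\tfrac{1}{(n-k)!}$. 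This is just $e^{\sigma(Y_1)}=e^{\sigma(Y_2)}e^{\sigma(Z)}$ unfolded term by term.

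Granting the multiplicativity, the preimage $p_{Y_2,Y_1}^{-1}\alpha=\{\theta\in\Gamma_0(Y_1):\theta\cap Y_2\in\alpha\}$ corresponds under the isomorphism above to the ``rectangle'' $\alpha\times\Gamma_0(Z)$, so $\lambda_{Y_1}(p_{Y_2,Y_1}^{-1}\alpha)=\lambda_{Y_2}(\alpha)\,\lambda_Z(\Gamma_0(Z))=\lambda_{Y_2}(\alpha)\,e^{\sigma(Z)}$; multiplying by $e^{-\sigma(Y_1)}$ and using $\sigma(Y_1)=\sigma(Y_2)+\sigma(Z)$ (legitimate since $\sigma(Y_1)<\infty$) gives
\[
\pi^{Y_1}\!\bigl(p_{Y_2,Y_1}^{-1}\alpha\bigr)=e^{-\sigma(Y_1)}e^{\sigma(Z)}\lambda_{Y_2}(\alpha)=e^{-\sigma(Y_2)}\lambda_{Y_2}(\alpha)=\pi^{Y_2}(\alpha),
\]
which is precisely \eqref{mu_consistent}. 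Then Theorem~\ref{th_proj_limit} applies verbatim and produces the unique probability measure $\pi=\mu^X$ on $\mathcal{B}(\Gamma(X))$ satisfying \eqref{limit_measure_pi}, completing the proof.

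The only real obstacle I foresee is the measure-theoretic bookkeeping in the multiplicativity step: one must check carefully that $(\xi',\xi'')\mapsto\xi'\sqcup\xi''$ is genuinely a Borel isomorphism of $\Gamma_0(Y_2)\times\Gamma_0(Z)$ (with the product of the ``ordinary'' topologies \eqref{disjunct_summands}) onto $\Gamma_0(Y_1)$, and that the level-$n$ identity for $\sigma^{(n)}$ holds after deletion of all diagonals. Here the non-atomicity of $\sigma$ built into \eqref{sigma} is exactly what makes the diagonals negligible, so that the combinatorial count is exact; granting this, everything else is routine substitution.
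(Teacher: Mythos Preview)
Your argument is correct and follows the same overall route as the paper: check that each $\pi^Y$ is a probability measure via $\lambda(\Gamma_0(Y))=e^{\sigma(Y)}$, verify the consistency condition \eqref{mu_consistent}, and invoke Theorem~\ref{th_proj_limit}. The only difference is one of completeness: the paper reduces consistency to the identity $e^{-\sigma(Y_1)}\lambda(\beta)=e^{-\sigma(Y_2)}\lambda(\beta_{Y_2})$ and then simply cites \cite{Fink-doctor,Part,Albeverio-98} for its proof, whereas you actually carry out the computation via the multiplicativity $\lambda_{Y_1}\cong\lambda_{Y_2}\otimes\lambda_{Z}$ under the splitting $Y_1=Y_2\sqcup Z$, which is exactly the standard argument those references contain. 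Your caveat about non-atomicity of $\sigma$ making the diagonals negligible is the right technical point to flag; once that is granted, the level-by-level binomial identity you describe is routine, so your proposal is in fact more self-contained than the paper's own proof.
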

\begin{proof}
  Using \eqref{B(Gamma_0)} we conclude that, in our situation,
  $\lambda(\Gamma_0(Y))=e^{\sigma(Y)}$, therefore the measure
  \eqref{prob_measures} is a probability measure, $\pi^Y(\Gamma_0(Y))=1$.

  The condition \eqref{mu_consistent} is also fulfilled.  It is
  necessary to prove that the measure $\mu^Y$ of the form
  $\mu^Y=\pi^Y$, where $\pi^Y$ is given by \eqref{prob_measures},
  satisfies equality \eqref{mu_consistent}.  We have
\[ \mu^{Y_2}(\alpha)=e^{-\sigma(Y_2)}\lambda(\alpha), \quad
\alpha\in\mathcal{B}(\Gamma_0(Y_2))\subset \mathcal{B}(\Gamma_0(Y_1)), \]
\[ \mu^{Y_1}(p_{Y_2,Y_1}^{-1}\alpha)=e^{-\sigma(Y_1)}\lambda(p_{Y_2,Y_1}^{-1}\alpha) =
e^{-\sigma(Y_1)}\lambda(\beta).\]
Here $\beta\in\mathcal{B}(\Gamma_0(Y_1))$ is such that $\alpha=\beta_{Y_2}=p_{Y_2,Y_1}\beta$.

Thus, it is necessary to prove that
\begin{equation}\label{th_Poisson_measure_proof_1}
e^{-\sigma(Y_1)}\lambda(\beta)=e^{-\sigma(Y_2)}\lambda(\beta_{Y_2}),
\quad \beta\in\mathcal{B}(\Gamma_0(Y_1)), \quad Y_2\subset Y_1\subset X.
\end{equation}

The measure $\sigma$ on $X$ is given by \eqref{sigma} and is
non-degenerate and non-atomic.  The proof of
\eqref{th_Poisson_measure_proof_1} in the case of the Lebesgue measure
$\sigma$ on $X=\mathbb{R}^d$ is given in \cite{Fink-doctor}.  The
general situation is considered in \cite{Part,Albeverio-98}.
\end{proof}

Consider some properties of the Poisson measure, which will be needed
in the sequel.

\begin{lemma}\label{lemma_Poisson_measure_positive}
  The Poisson measure is positive on open sets from $\Gamma(X)$ in the
  vague topology.
\end{lemma}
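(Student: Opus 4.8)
The statement asserts that $\pi(U) > 0$ for every nonempty open $U \subset \Gamma(X)$ in the ordinary-vague topology. The plan is to reduce the claim to a statement about the finite-$Y$ measures $\pi^Y = e^{-\sigma(Y)}\lambda$ on $\Gamma_0(Y)$ via the projective-limit property \eqref{limit_measure_pi}, and then to use the explicit formula \eqref{B(Gamma_0)} for the Lebesgue-Poisson measure together with the non-degeneracy of the intensity measure $\sigma$.

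First I would recall that the ordinary-vague topology on $\Gamma(X)$ is generated by cylinder sets: a basic nonempty open set $U$ is determined, for some $Y \in \mathcal{B}_c(X)$, by conditions on the finite configuration $\gamma_Y = \gamma \cap Y$, i.e. $U \supset p_Y^{-1}(V)$ for some nonempty open $V \subset \Gamma_0(Y) = \bigsqcup_n \Gamma_Y^{(n)}$ (using \eqref{Gamma(Y)}). Since $\pi$ is a measure and $\pi(U) \geq \pi(p_Y^{-1}(V)) = \pi^Y(V)$ by \eqref{limit_measure_pi}, it suffices to prove $\pi^Y(V) > 0$ for every nonempty open $V \subset \Gamma_0(Y)$. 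Here one must take a little care that the open sets of the form $p_Y^{-1}(V)$ (as $Y$ and $V$ vary) genuinely form a base — this follows from the description of the topology as the weak topology on $\mathcal{D}'$ restricted to $\Gamma(X)$, pairing against finitely many test functions supported in some compact set, hence effectively against functions on a single $Y \in \mathcal{B}_c(X)$.

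Next, for the finite-$Y$ step: a nonempty open $V \subset \Gamma_0(Y)$ meets some stratum $\Gamma_Y^{(n)}$ in a nonempty relatively open set $V^{(n)} = V \cap \Gamma_Y^{(n)}$, which (off the diagonal) is a nonempty open subset of the symmetric power of $Y$, hence pulls back to a nonempty open subset of $Y^n$. By \eqref{B(Gamma_0)}, $\pi^Y(V) \geq e^{-\sigma(Y)}\frac{1}{n!}\sigma^{(n)}(V^{(n)})$, so it remains to check $\sigma^{(n)}(V^{(n)}) = \sigma^{\widehat{\otimes} n}(V^{(n)}) > 0$. Shrinking $V^{(n)}$, I may assume it contains a product box $\alpha_1 \times \cdots \times \alpha_n$ with the $\alpha_j$ nonempty open and pairwise disjoint (possible since the $\alpha_j$ can be taken to be small balls around $n$ distinct points of a configuration in $V^{(n)}$, and points of a configuration are distinct); then $\sigma^{\widehat{\otimes} n}$ of this box dominates $\prod_j \sigma(\alpha_j) > 0$ because $\sigma$ is non-degenerate, i.e. strictly positive on nonempty open sets. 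This gives $\pi^Y(V) > 0$ and completes the argument.

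The main obstacle I anticipate is the bookkeeping in the reduction step rather than any deep difficulty: namely, verifying carefully that in the ordinary-vague topology every nonempty open set contains a cylinder set $p_Y^{-1}(V)$ with $V$ open and nonempty in $\Gamma_0(Y)$ for a suitable $Y \in \mathcal{B}_c(X)$ — in particular handling the fact that $\Gamma(X)$ carries a genuinely different (ordinary) topology on the part $\Gamma_0(X)$, so one should check the assertion separately for a neighborhood of a point $\gamma \in \Gamma_0(X)$ (where one works directly in some $\Gamma^{(n)}$ and the same $\sigma^{(n)} > 0$ computation applies, remembering $\sigma^{(0)}(\varnothing) = 1$ so $\pi$ of a neighborhood of $\varnothing$ is positive) and for a point $\gamma \in \Gamma \setminus \Gamma_0(X)$ (where the vague/cylinder description above is used). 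Once this case split is made explicit, everything reduces to the positivity of $\sigma$ on open sets, which is hypothesis \eqref{sigma}.
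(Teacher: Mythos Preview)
Your approach is correct and essentially the same as the paper's: reduce via the projective-limit relation \eqref{limit_measure_pi} to showing $\pi^Y(V)>0$ for nonempty open $V\subset\Gamma_0(Y)$ (where vague coincides with ordinary by \eqref{Gamma(Y)}), and then deduce this from formula \eqref{B(Gamma_0)} together with the non-degeneracy of $\sigma$. One small remark: the lemma is stated for the \emph{vague} topology on $\Gamma(X)$, not the ordinary-vague one, so your final case split on $\Gamma_0(X)$ versus $\Gamma\setminus\Gamma_0(X)$ is unnecessary --- the cylinder-set reduction already handles every point, and in fact you are more explicit here than the paper, which leaves the passage from a general open $\beta$ to a cylinder $p_Y^{-1}(V)$ and the positivity $\sigma^{(n)}(V^{(n)})>0$ largely implicit.
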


\begin{proof}
  Let $\beta$ be an open set in vague topology from $\Gamma(X)$.  It
  is necessary to prove that $\pi(\beta)>0$.  If
  $\beta\subset\Gamma(X)$ is an open set, then it is easy to prove
  that, for every $Y\subset X$, the set
  $\alpha:=\beta_Y=p_Y^{-1}\beta$ is also open in the vague topology,
  considered on the space $\Gamma(Y)$.

  In particular, this is true if $Y\in \mathcal{B}_c(X)$.  But using
  \eqref{Gamma(Y)} we know that the vague topology on
  $\Gamma(Y)=\Gamma_0(Y)$ is the ordinary topology.  Thus, the set
  $\alpha$ is open in the ordinary topology on $\Gamma_0(Y)$.  Using
  the formula \eqref{limit_measure_pi} we assert that it is necessary
  to prove that $\pi^Y(\alpha)>0$, or, using \eqref{prob_measures},
  that $\lambda(\alpha)>0$.  But this follows from \eqref{B(Gamma_0)}.
\end{proof}

Let us mention some simple properties of the Poisson and
Lebesgue-Poisson measures introduced by
Theorem~\ref{th_Poisson_measure} and by definition \eqref{B(Gamma_0)}.

The Lebesgue-Poisson measure $\lambda$ is defined on sets
$\alpha\subset\Gamma(X)$ which are Borel in the vague topology,
$\alpha\in\mathcal{B}(\Gamma(X))$.  Let
\begin{equation}\label{LP_meaure_1}
X=\bigcup_{n=1}^\infty Y_n, \quad Y_1\subset Y_2\subset \dots, \quad Y_n\in \mathcal{B}_c(X).
\end{equation}
Then
\begin{equation}\label{LP_meaure_2}
\Gamma_0(X)=\bigcup_{n=1}^\infty \Gamma_0(Y_n),
\quad \Gamma_0(Y_1)\subset \Gamma_0(Y_2)\subset \dots, \quad \Gamma_0(Y_n)\in \mathcal{B}(\Gamma(X)).
\end{equation}

Using \eqref{LP_meaure_2} for every
$\alpha\in\mathcal{B}(\Gamma(X))$ we get
\begin{equation}\label{LP_meaure_3}
\alpha=\bigcup_{n=1}^\infty\alpha_n, \quad
\text{where}\quad  \alpha_n=\alpha\bigcap\Gamma_0(Y_n)\in
\mathcal{B}(\Gamma(X)); \quad \alpha_1\subset \alpha_2 \subset
\dots
\end{equation}

Absolute additivity of the Poisson measure $\pi$, with a use of
\eqref{LP_meaure_3} and \eqref{prob_measures}, gives
\begin{equation}\label{LP_meaure_4}
\pi(\alpha)=\lim_{n\rightarrow\infty}\pi(\alpha_n)=
\lim_{n\rightarrow\infty} e^{-\sigma(Y_n)}\lambda(\alpha_n).
\end{equation}
\begin{lemma}\label{lemma_Poisson_measure_zero}
  The Poisson measure $\pi$ of the set of all finite configurations is
  equal to zero, $\pi(\Gamma_0(X))=0$.
\end{lemma}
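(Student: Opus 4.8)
The plan is to use the approximation \eqref{LP_meaure_4} together with the fact that $\Gamma_0(X)$ is an increasing countable union of the sets $\Gamma_0(Y_n)$, and to show that each approximating quantity $e^{-\sigma(Y_n)}\lambda\bigl(\Gamma_0(X)\cap\Gamma_0(Y_n)\bigr)$ tends to zero as $n\to\infty$. First I would fix an exhausting sequence $Y_1\subset Y_2\subset\cdots$ of sets from $\mathcal{B}_c(X)$ with $\bigcup_n Y_n=X$ as in \eqref{LP_meaure_1}, which is possible since $X$ is a non-compact manifold. By \eqref{LP_meaure_2} we then have $\Gamma_0(X)=\bigcup_{n=1}^\infty\Gamma_0(Y_n)$ with the $\Gamma_0(Y_n)$ increasing, so the set $\alpha=\Gamma_0(X)$ in \eqref{LP_meaure_3} has $\alpha_n=\Gamma_0(X)\cap\Gamma_0(Y_n)=\Gamma_0(Y_n)$. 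Hence \eqref{LP_meaure_4} gives
\[
\pi(\Gamma_0(X))=\lim_{n\to\infty} e^{-\sigma(Y_n)}\lambda(\Gamma_0(Y_n)).
\]

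Next I would evaluate the right-hand side using the explicit formula for the Lebesgue--Poisson measure. By \eqref{B(Gamma_0)} (with the normalization $\sigma^{(0)}(\varnothing)=1$ and the remark following \eqref{int_f_on_Gamma_0}) we have $\lambda(\Gamma_0(Y_n))=e^{\sigma(Y_n)}$, so that each term $e^{-\sigma(Y_n)}\lambda(\Gamma_0(Y_n))=1$ — which does not immediately give zero. So the naive substitution is not enough; the point is that $\Gamma_0(X)\cap\Gamma_0(Y_n)$ must be understood as a subset of $\Gamma(X)$, and the relevant quantity in \eqref{LP_meaure_4} is $\pi(\Gamma_0(Y_n))$ computed as a subset of $\Gamma(X)$, i.e. via $p_{Y_n}^{-1}$. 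The correct reading of \eqref{limit_measure_pi} is that $\pi^{Y}(\alpha)=\pi(p_Y^{-1}\alpha)$, and $p_{Y_n}^{-1}(\Gamma_0(Y_n))$ is \emph{not} all of $\Gamma_0(Y_n)\subset\Gamma(X)$ but the (much larger) set of all $\gamma\in\Gamma(X)$ whose restriction to $Y_n$ is an arbitrary finite configuration — which is all of $\Gamma(X)$. Thus what \eqref{LP_meaure_4} actually computes is $\pi(\Gamma_0(Y_n))$ as the $\pi$-measure of the genuinely finite configurations lying in $Y_n$, namely of those $\gamma\in\Gamma(X)$ with $\gamma\subset Y_n$; and the $\pi$-measure of $\{\gamma : \gamma\subset Y_n\}$ is computed by restricting to a larger compact set $Z\supset Y_n$ and using the product structure of the Poisson measure.

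The key computation is then the following: for $Y_n\subset Z$ with $Z\in\mathcal{B}_c(X)$, the event $\{\gamma:\gamma\cap(Z\setminus Y_n)=\varnothing\}$ has $\pi$-measure $\pi^Z\bigl(\{\xi\in\Gamma_0(Z):\xi\subset Y_n\}\bigr)=e^{-\sigma(Z)}\lambda\bigl(\Gamma_0(Y_n)\bigr)=e^{-\sigma(Z)}e^{\sigma(Y_n)}=e^{-\sigma(Z\setminus Y_n)}$, using \eqref{prob_measures}, \eqref{B(Gamma_0)} and additivity of $\sigma$. Letting $Z=Y_m$ and then $m\to\infty$ gives that the $\pi$-measure of $\{\gamma:\gamma\subset Y_n\}$ is $\lim_{m\to\infty}e^{-\sigma(Y_m\setminus Y_n)}=e^{-\infty}=0$, since $\sigma(X)=+\infty$ and $\sigma(Y_n)<\infty$ force $\sigma(Y_m\setminus Y_n)\to\sigma(X\setminus Y_n)=+\infty$. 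Finally, since $\Gamma_0(X)=\bigcup_n\{\gamma:\gamma\subset Y_n\}$ is a countable increasing union of $\pi$-null sets, countable additivity of $\pi$ yields $\pi(\Gamma_0(X))=0$.

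The main obstacle is precisely the bookkeeping in the previous paragraph: one must be careful to distinguish $\Gamma_0(Y_n)$ viewed intrinsically (where $\pi^{Y_n}$ lives and $\lambda(\Gamma_0(Y_n))=e^{\sigma(Y_n)}$) from the set $\{\gamma\in\Gamma(X):\gamma\subset Y_n\}$ of configurations in $\Gamma(X)$ supported inside $Y_n$, and to compute the $\pi$-measure of the latter by passing to an even larger compact set and exploiting the ``independence'' (product) structure \eqref{prob_measures} of the Lebesgue--Poisson measure. Once that identification is made, the divergence $\sigma(X)=+\infty$ does all the work. I would double-check the measurability of $\{\gamma:\gamma\subset Y_n\}$ in $\mathcal{B}(\Gamma(X))$ (it equals $p_{Z}^{-1}\{\xi\in\Gamma_0(Z):\xi\subset Y_n\}$ for any $Z\supset Y_n$, hence is Borel) and that the sets $\{\gamma:\gamma\subset Y_n\}$ indeed increase to $\Gamma_0(X)$, which is immediate from $\bigcup_n Y_n=X$ and local finiteness of configurations.
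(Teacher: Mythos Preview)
Your final argument is correct and is essentially the paper's proof: show that for each fixed precompact set the configurations supported in it have $\pi$-measure zero (by passing to a larger window and using $\sigma(X)=+\infty$), then write $\Gamma_0(X)$ as a countable increasing union of such sets. The only organizational difference is that the paper separates the roles of the two compacts from the outset: it fixes a compact $\Lambda$ and then runs the exhaustion $Y_n$, so that the sets $\alpha_n=\Gamma_0(\Lambda)\cap\Gamma_0(Y_n)$ in \eqref{LP_meaure_3} stabilize to $\Gamma_0(\Lambda)$ for $n\ge n_0$ and \eqref{LP_meaure_4} gives $\pi(\Gamma_0(\Lambda))=\lambda(\Gamma_0(\Lambda))\lim_n e^{-\sigma(Y_n)}=0$ directly---this is exactly your ``$Y_n\subset Y_m$'' computation with $\Lambda$ playing the role of your inner $Y_n$ and the paper's $Y_n$ playing the role of your outer $Y_m$. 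Your careful distinction between $\alpha_n\subset\Gamma(X)$ and $p_{Y_n}^{-1}\alpha_n$ in fact clarifies the passage from $\pi(\alpha_n)$ to $e^{-\sigma(Y_n)}\lambda(\alpha_n)$ that the paper's \eqref{LP_meaure_4} takes for granted.
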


\begin{proof}
Let $\Lambda\subset X$ be some compact subset of $X$ and
$\Gamma(\Lambda)$ a corresponding subset of $\Gamma(X)$,
$\Gamma(X)\supset\Gamma(\Lambda)$. All configurations from
$\Gamma(\Lambda)$ are finite, therefore we can write (see
\eqref{disjunct_summands})
\begin{equation}\label{Gamma(Lambda)}
\Gamma(\Lambda)=\Gamma_0(\Lambda)=\bigsqcup_{m=1}^\infty\Gamma^{(m)}_\Lambda.
\end{equation}

From \eqref{LP_meaure_1} it follows that $\Lambda\subset Y_{n_0}$ for
some $n_0\in\mathbb{N}$. Therefore, $\Gamma_0(\Lambda)\subset
\Gamma_0(Y_{n_0})\subset \Gamma_0(Y_{n_0+1})\subset\dots$ Take $\alpha
= \Gamma_0(\Lambda)\in \mathcal{B}(\Gamma(X))$ in
\eqref{LP_meaure_3}. Then, in this case,
$\alpha_{n_0}=\alpha_{n_0+1}=\alpha_{n_0+2}=\dots$ and
\eqref{LP_meaure_4} gives
\[ \pi(\Gamma_0(\Lambda))=\pi(\alpha)=
\lim_{n\rightarrow\infty} e^{-\sigma(Y_n)}\lambda(\alpha_n)=
\lambda(\alpha_{n_0})\lim_{n\rightarrow\infty} e^{-\sigma(Y_n)}=0, \]
since  \eqref{LP_meaure_1} takes place, and $\sigma(Y_n)\rightarrow+\infty$.

Therefore, $\pi(\Gamma_0(\Lambda))=0$ for every $\Lambda\subset X$,
i.~e. $\pi(\Gamma_0(X))=0$.
\end{proof}

We will also prove two known facts about the measures under
consideration.

\begin{theorem}\label{th_Laplace_transform}
  The Laplace transform of the Poisson measure $\pi(\alpha)$,
  $\alpha\in\mathcal{B}(\Gamma(X))$ is given by
\begin{equation}\label{Laplace_transform}
\int_{\Gamma(X)}e^{\langle\gamma,f\rangle}d\pi(\gamma)=
\exp\left(\int_{X}(e^{f(x)}-1)d\sigma(x)\right), \quad f\in\mathcal{D}.
\end{equation}
\end{theorem}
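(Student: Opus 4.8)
The plan is to compute the Laplace transform first on a fixed compact piece $Y\in\mathcal{B}_c(X)$ using the explicit form of the Lebesgue--Poisson measure, and then pass to the limit $Y\uparrow X$. Fix $f\in\mathcal{D}$ and let $Y\in\mathcal{B}_c(X)$ be a compact set containing $\operatorname{supp} f$. Since $\Gamma(Y)=\Gamma_0(Y)=\bigsqcup_{n=0}^\infty\Gamma_Y^{(n)}$ by \eqref{Gamma(Y)} and $\langle\gamma,f\rangle=\sum_{x\in\gamma}f(x)$ by \eqref{w_gamma}, the function $\gamma\mapsto e^{\langle\gamma,f\rangle}$ restricted to $\Gamma^{(n)}_Y$ equals $\prod_{j=1}^n e^{f(x_j)}$, which is exactly the character $\chi_{e^f}(\xi)$ in the notation of \eqref{character_chi}. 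Hence, using the description \eqref{int_f_on_Gamma_0} of integration against $d\lambda$ together with the probability measure \eqref{prob_measures},
\begin{equation}\label{lap_Y}
\int_{\Gamma(Y)}e^{\langle\gamma,f\rangle}d\pi^Y(\gamma)=
e^{-\sigma(Y)}\sum_{n=0}^\infty\frac{1}{n!}\int_{Y^n}\prod_{j=1}^n e^{f(x_j)}\,d\sigma^{\widehat{\otimes}n}(x_1,\dots,x_n)=
e^{-\sigma(Y)}\exp\left(\int_Y e^{f(x)}\,d\sigma(x)\right),
\end{equation}
because the $n$-th term is $\frac{1}{n!}\left(\int_Y e^{f(x)}d\sigma(x)\right)^n$. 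Rewriting $-\sigma(Y)=-\int_Y 1\,d\sigma$ and combining the exponents gives $\int_{\Gamma(Y)}e^{\langle\gamma,f\rangle}d\pi^Y(\gamma)=\exp\left(\int_Y(e^{f(x)}-1)\,d\sigma(x)\right)=\exp\left(\int_X(e^{f(x)}-1)\,d\sigma(x)\right)$, the last equality because $e^{f(x)}-1=0$ off $\operatorname{supp} f\subset Y$.

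Next I would transfer this from $\pi^Y$ to $\pi$. By the projective-limit property \eqref{limit_measure_pi}, $\pi^Y$ is the pushforward of $\pi$ under $p_Y:\Gamma(X)\to\Gamma(Y)$, $\gamma\mapsto\gamma\cap Y$. Since $f$ is supported in $Y$, we have $\langle\gamma,f\rangle=\langle\gamma\cap Y,f\rangle=\langle p_Y\gamma,f\rangle$ for every $\gamma\in\Gamma(X)$, so $e^{\langle\gamma,f\rangle}=(e^{\langle\cdot,f\rangle}\circ p_Y)(\gamma)$. The change-of-variables formula for pushforward measures then yields
\begin{equation}\label{lap_X}
\int_{\Gamma(X)}e^{\langle\gamma,f\rangle}d\pi(\gamma)=\int_{\Gamma(Y)}e^{\langle\gamma,f\rangle}d\pi^Y(\gamma)=\exp\left(\int_X(e^{f(x)}-1)\,d\sigma(x)\right),
\end{equation}
which is \eqref{Laplace_transform}. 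No genuine limiting argument is even needed, since one single compact $Y\supset\operatorname{supp} f$ already carries all the relevant mass of the integrand; if one prefers, one can instead start from $\Gamma_0(Y_n)$ with $Y_n\uparrow X$ as in \eqref{LP_meaure_1}--\eqref{LP_meaure_4} and let $n\to\infty$, but the answer stabilizes as soon as $Y_n\supset\operatorname{supp} f$.

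The only points requiring care — and these are the main (mild) obstacles — are measurability and integrability justifications rather than the algebra. One must check that $\gamma\mapsto e^{\langle\gamma,f\rangle}$ is $\mathcal{B}(\Gamma(X))$-measurable (it is, being a composition of the measurable projection $p_Y$ with a continuous function on $\Gamma(Y)=\Gamma_0(Y)$, cf. Lemma~\ref{lemma_Kf_continuous} for the flavor of such continuity arguments), and that the series in \eqref{lap_Y} converges absolutely so that Tonelli/Fubini applies termwise — this holds because $\int_Y e^{f(x)}d\sigma(x)\le e^{\|f\|_\infty}\sigma(Y)<\infty$ as $Y$ is compact and $\sigma$ is a Radon measure, giving a convergent exponential series with nonnegative terms. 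Granting these standard facts, the identity \eqref{Laplace_transform} follows from \eqref{lap_Y}--\eqref{lap_X}.
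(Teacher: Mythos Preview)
Your proof is correct and follows essentially the same route as the paper: choose a compact $Y\in\mathcal{B}_c(X)$ containing $\operatorname{supp} f$, compute $\int_{\Gamma(Y)}e^{\langle\gamma,f\rangle}\,d\pi^Y(\gamma)$ via the explicit Lebesgue--Poisson expansion \eqref{prob_measures}--\eqref{int_f_on_Gamma_0}, and use the projective-limit relation \eqref{limit_measure_pi} to identify this with the integral over $\Gamma(X)$. Your write-up is in fact more careful than the paper's own proof about the pushforward step and the measurability/integrability justifications.
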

\begin{proof}
  For fixed $f\in\mathcal{D}$ we can find a set $Y\in
  \mathcal{B}_c(X)$ such that $f(x)=0$, $x\in X\setminus Y$. Using
  \eqref{limit_measure_pi}, \eqref{prob_measures}, \eqref{sigma^n} and
  \eqref{int_f_on_Gamma_0} we can write
\begin{align*}
\int_{\Gamma(X)}e^{\langle\gamma,f\rangle}d\pi(\gamma) & =
\int_{\Gamma(Y)}e^{\langle\gamma,f\rangle}d\pi^Y(\gamma)=
e^{-\sigma(Y)}\int_{\Gamma(Y)}e^{\langle\gamma,f\rangle}d\rho(\gamma)
\\
& =
e^{-\sigma(Y)}\sum_{n=0}^\infty\frac{1}{n!}\int_{Y^n}e^{\sum_{j=1}^nf(x_j)}
d\sigma^{(n)}(x_1,\dots,x_n)
\\
& =
e^{-\sigma(Y)}\sum_{n=0}^\infty\frac{1}{n!}\left(\int_{Y}e^{f(x)}d\sigma(x)\right)=
\exp\left(\int_{X}(e^{f(x)}-1)d\sigma(x)\right).
\end{align*}
\end{proof}

It is usual to say that, if for some measure $\rho$, it is possible to
define its Laplace transform, and for this transform equality
\eqref{Laplace_transform} it follows, that this measure $\rho$ is a
Poisson measure.

At last, we will need the following equality that is a special case of
Theorem~4.1 from \cite[Example 4.1]{Kondr-Kuna} (see also
\cite{Oliv-phd}).
\begin{proposition}\label{th_LP_P}
  The following relation between the Lebesgue-Poisson measure
  $d\lambda(\xi)$ and Poisson measure $d\pi(\gamma)$ holds true:
\begin{equation}\label{LP_P}
\int_{\Gamma_0(X)}f(\xi)d\lambda(\xi)=\int_{\Gamma(X)}(Kf)(\gamma)d\pi(\gamma),
\quad f\in\mathcal{F}_{\operatorname{fin}}(\mathcal{D}).
\end{equation}
\end{proposition}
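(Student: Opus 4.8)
The plan is to localize to a compact piece of $X$ and then carry out a combinatorial summation. Since $f\in\mathcal{F}_{\operatorname{fin}}(\mathcal{D})$, by \eqref{f_in F_fin}--\eqref{f_finite_non0_coord} there is an $m\in\mathbb{N}_0$ with $f_n=0$ for $n>m$; each $f_n$ being a compactly supported function on $X^n$, we may fix one $Y\in\mathcal{B}_c(X)$ such that $f_n$ is supported in $Y^n$ for all $n=1,\dots,m$. First I would note that both sides of \eqref{LP_P} are well defined: the left-hand side is a finite sum of finite integrals by \eqref{int_f_on_Gamma_0}, while on the right-hand side \eqref{Kf(xi)} gives the bound $|(Kf)(\gamma)|\le\sum_{l=0}^{m}\binom{|\gamma\cap Y|}{l}\,\|f_l\|_{\infty}$, a polynomial in $|\gamma\cap Y|$; this is $\pi$-integrable because, by \eqref{B(Gamma_0)}, \eqref{prob_measures} and \eqref{limit_measure_pi}, $|\gamma\cap Y|$ is Poisson-distributed with parameter $\sigma(Y)<\infty$ under $\pi$ and therefore has finite moments of every order.

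The second step is the observation that $Kf$ is cylindrical over $Y$: if $\xi\subset\gamma$ and $f(\xi)\ne0$ then $\xi\subset Y$, hence $\xi\subset\gamma\cap Y=p_Y\gamma=:\gamma_Y$, while conversely every $\xi\subset\gamma_Y$ satisfies $\xi\subset\gamma$; so, by \eqref{K-trans}, $(Kf)(\gamma)=(Kf)(\gamma_Y)$ for every $\gamma\in\Gamma(X)$, and this function is Borel since it factors through $p_Y$ and is continuous on $\Gamma(Y)$ (see \eqref{Kf(xi)} and Lemma~\ref{lemma_Kf_continuous}). Because \eqref{limit_measure_pi} says precisely that $\pi^Y$ is the image of $\pi$ under $p_Y$, and $\Gamma(Y)=\Gamma_0(Y)$ by \eqref{Gamma(Y)}, the definition \eqref{prob_measures} of $\pi^Y$ gives
\[
\int_{\Gamma(X)}(Kf)(\gamma)\,d\pi(\gamma)=\int_{\Gamma_0(Y)}(Kf)(\eta)\,d\pi^Y(\eta)=e^{-\sigma(Y)}\int_{\Gamma_0(Y)}(Kf)(\eta)\,d\lambda(\eta).
\]

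The core of the argument is then the identity $\int_{\Gamma_0(Y)}(Kf)(\eta)\,d\lambda(\eta)=e^{\sigma(Y)}\int_{\Gamma_0(X)}f(\xi)\,d\lambda(\xi)$, which I would prove by expanding the left side via \eqref{int_f_on_Gamma_0} and inserting the explicit formula \eqref{Kf(xi)} for $(Kf)([x_1,\dots,x_k])$, with $\alpha$ ranging over all subsets of $\{1,\dots,k\}$ (the empty $\alpha$ contributing $f(\varnothing)$, cf. \eqref{Kf(gamma)}). By the symmetry of $\sigma^{(k)}=\sigma^{\widehat{\otimes}k}$, the $\binom{k}{l}$ subsets $\alpha$ with $|\alpha|=l$ all contribute equally, and integrating out the remaining $k-l$ coordinates produces a factor $\sigma(Y)^{k-l}$, so that
\[
\frac1{k!}\int_{Y^k}(Kf)([x_1,\dots,x_k])\,d\sigma^{(k)}=\sum_{l=0}^{k}\frac{\sigma(Y)^{k-l}}{(k-l)!\,l!}\int_{Y^l}f_l\,d\sigma^{(l)}.
\]
Summing over $k\ge0$ and interchanging with the sum over $l$ (finite, since $f_l=0$ for $l>m$), the inner sum over $k$ collapses to $\sum_{j\ge0}\sigma(Y)^j/j!=e^{\sigma(Y)}$, convergent since $\sigma(Y)<\infty$, leaving $e^{\sigma(Y)}\sum_{l=0}^{m}\frac1{l!}\int_{Y^l}f_l\,d\sigma^{(l)}=e^{\sigma(Y)}\int_{\Gamma_0(X)}f\,d\lambda$ by \eqref{int_f_on_Gamma_0}. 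Combining this with the preceding display yields \eqref{LP_P}. I expect the only genuinely delicate point to be this last rearrangement — keeping the combinatorial bookkeeping and the order of summation under control — but it is harmless, since every sum occurring is finite except for one absolutely convergent series; alternatively one may first check \eqref{LP_P} on the linear span of the truncations of the characters \eqref{character_chi} to $\{\xi:|\xi|\le m\}$, where the computation is slightly more transparent, and then pass to general $f$ by linearity and continuity.
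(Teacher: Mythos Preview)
Your argument is correct, but you should be aware that the paper does not actually prove this proposition: it simply records the statement as ``a special case of Theorem~4.1 from \cite[Example~4.1]{Kondr-Kuna} (see also \cite{Oliv-phd})'' and moves on. So there is no in-paper proof to compare against.

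What you have written is in fact the standard proof one finds in those references: localize to a set $Y\in\mathcal{B}_c(X)$ carrying the support of every $f_n$, use the cylindricality $(Kf)(\gamma)=(Kf)(\gamma\cap Y)$ and the image-measure relation \eqref{limit_measure_pi} to rewrite the right-hand side as $e^{-\sigma(Y)}\int_{\Gamma_0(Y)}(Kf)\,d\lambda$, and then expand via \eqref{int_f_on_Gamma_0} and \eqref{Kf(xi)} to produce the factor $e^{\sigma(Y)}$ combinatorially. Your justification of integrability on the right (the Poisson law of $|\gamma\cap Y|$ has all moments) and of the interchange of sums (one index ranges only over $0,\dots,m$) is clean. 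One tiny remark: when you pass from integrals over $\Gamma^{(k)}_Y$ to integrals over $Y^k$ you are silently using that $\sigma$ is non-atomic, so the diagonals carry zero $\sigma^{(k)}$-mass; this is stated in the paper just after \eqref{sigma}, so it is legitimate, but worth a word.
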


\section{Poisson measure as a spectral measure of some family of
  commutating selfadjoint operators}
\label{sect_pois_measure_spectr_measure}

In the first part of Section \ref{sect_pois_measure_classic} we had a
Hilbert space $\mathcal{H}_s$ which was constructed in the following
way.

We have introduced a commutative algebra $\mathcal{A}$, whose elements
are vectors from the space
$\mathcal{F}_{\operatorname{fin}}(\mathcal{D})$ (see \eqref{F_fin},
\eqref{f_in F_fin}, \eqref{f}) and a composition $\star$ defined by
\eqref{f_g_convolution}.

On the space
$\mathcal{A}=\mathcal{F}_{\operatorname{fin}}(\mathcal{D})$, we
consider a linear functional $s\in\mathcal{A}'$, $s\neq 0$, which is
non-negative in the sense of \eqref{nonnegative_functional}. We
consider now only the case when $s$ is positive, i.~e., condition
\eqref{positive_functional} is fulfilled. Construct the Hilbert space
$\mathcal{H}_s$ that is a completion of $\mathcal{A}$ with respect to
the scalar product \eqref{quasiscalar_product}.

For this space $\mathcal{H}_s$ in the article \cite{Berez-Tesko-16},
we considered a family $(A(\varphi))_{\varphi\in \mathcal{D}}$ of
unbounded (in general) operators defined by
\begin{equation}\label{A_phi}
\mathcal{H}_s\supset \mathcal{F}_{\operatorname{fin}}(\mathcal{D})=\mathcal{A}\ni f\mapsto A(\varphi)f=\varphi\star f\in\mathcal{A},
\end{equation}
where $\varphi$ is a function from $\mathcal{D}$ (i.~e., a real-valued
function from $\mathcal{F}_{1}(\mathcal{D})=\mathcal{D}$).  The
closure $\tilde{A}(\varphi)$ of operator \eqref{A_phi} is well-defined
in the space $\mathcal{H}_s$ and is Hermitian.

Such operators $A(\varphi)$, $\varphi\in \mathcal{D}$, were
investigated in the article \cite{Berez-Tesko-16} (and earlier in
\cite{Berez-03,Berez-Mierz-07}) even in the general case, when the
requirement of positivity \eqref{positive_functional} was omitted, and
the Hilbert space $\mathcal{H}_s$ consisted of classes of vectors from
$\mathcal{A}$.

Under some conditions every operator $A(\varphi)$, $\varphi\in
\mathcal{D}$, is essentially selfadjoint and their set forms a set of
commutative selfadjoint operators acting on the space $\mathcal{H}_s$.
In the article \cite{Berez-Tesko-16} (and in
\cite{Berez-03,Berez-Mierz-07}) the spectral representation for this
family $(\tilde{A}(\varphi))_{\varphi\in \mathcal{D}}$ was considered
and some applications of this theory were given.

In this article, we will consider only the case where the functional
$s\in\mathcal{A}'$ has the form of an integral. Namely, let $s$ be
given by the integral
\begin{equation}\label{s_as_integral}
s(f)=\int_{\Gamma_0}f(\xi)d\nu(\xi)=
\sum_{n=0}^\infty\,\,\int_{\Gamma^{(n)}}f(\xi)d\nu(\xi),
\quad f\in
\mathcal{A}=\mathcal{F}_{\operatorname{fin}}(\mathcal{D}),
\end{equation}
where $d\nu(\xi)$ is some finite measure on the $\sigma$-algebra of
Borel sets in ordinary topology $\Gamma_0$, given by \eqref{Gamma_2}.
For $f\in \mathcal{F}_{\operatorname{fin}}(\mathcal{D})$ the vectors
\eqref{f_in F_fin} are finite and every function
$f\upharpoonright\Gamma^{(n)}$ is a finite smooth function, therefore,
the integral \eqref{s_as_integral} always exists.

In the articles \cite{Berez-03,Berez-Mierz-07,Berez-Tesko-16} the
following essential fact was proved: if the measure $d\nu(\xi)$ in the
representation \eqref{s_as_integral} is such, that for every compact
$\Lambda\subset X$ there exists a constant $C_\Lambda>0$ such that
\begin{equation}\label{nu_leq_c}
\nu(\Gamma_\Lambda^{(n)})\leq C_\Lambda^n, \quad n\in\mathbb{N}_0,
\end{equation}
then the closures $\tilde{A}(\varphi)$ of the operators $A(\varphi)$
on the space $\mathcal{H}_s$ make a family
$(\tilde{A}(\varphi))_{\varphi\in \mathcal{D}}$ of commuting
selfadjoint operators (this result is true even when the condition
\eqref{positive_functional} is not fulfilled).  Let us explain that
$\Gamma_\Lambda^{(n)}$ denotes the space $\Gamma_X^{(n)}$ from
\eqref{disjunct_summands} if we replace $X$ with $\Lambda\subset X$.

Let us pass to a study of the case of a Poisson measure.  Recall that
a non-atomic initial measure
$\mathcal{B}(X)\ni\alpha\mapsto\sigma(\alpha)\geq 0$ is given on Borel
sets of the space $X$.  Using this measure by rule \eqref{B(Gamma_0)}
we construct the corresponding Lebesgue-Poisson measure $\lambda(\xi)$
on the $\sigma$-algebra of Borel sets $\mathcal{B}(\Gamma_0)$
with respect to the ordinary topology on~$\Gamma_0$.

\begin{theorem}\label{th_positive_functional}
  The functional $s$ of the form \eqref{s_as_integral}, where
  $d\nu(\xi)=d\lambda(\xi)$ is a Lebesgue-Poisson measure on
  $\Gamma_0$, is positive, i.~e., condition
  \eqref{positive_functional} is fulfilled.

  The condition \eqref{nu_leq_c} for such a functional is also
  fulfilled.
\end{theorem}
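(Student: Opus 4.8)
The plan is to verify the two assertions separately, starting with the positivity (non‑degeneracy) of $s$, which is the substantive part, and then checking the growth bound \eqref{nu_leq_c}, which is essentially a computation with the explicit form of the Lebesgue–Poisson measure.

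For positivity, I would use the $K$-transform together with Proposition~\ref{th_LP_P}. The key observation is that for $f\in\mathcal{F}_{\operatorname{fin}}(\mathcal{D})$ we have, by \eqref{quasiscalar_product}, $s(f\star\overline f)=\int_{\Gamma_0}(f\star\overline f)(\xi)\,d\lambda(\xi)$, and by the multiplicativity property \eqref{A_multiplication} of $K$ this equals $\int_{\Gamma(X)}K(f\star\overline f)(\gamma)\,d\pi(\gamma)=\int_{\Gamma(X)}|(Kf)(\gamma)|^2\,d\pi(\gamma)\ge 0$ (using that $K$ commutes with complex conjugation, which is immediate from the definition \eqref{K-trans}). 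This already re‑proves non‑negativity, but more importantly it identifies the null vectors: $s(f\star\overline f)=0$ forces $(Kf)(\gamma)=0$ for $\pi$-almost every $\gamma$. Now I would invoke Lemma~\ref{lemma_Kf_continuous}, which says $(Kf)$ is vague‑continuous on $\Gamma(X)$, together with Lemma~\ref{lemma_Poisson_measure_positive}, which says $\pi$ charges every nonempty vague‑open set; hence a continuous function vanishing $\pi$-a.e. must vanish identically on $\Gamma(X)$. Finally, since $K$ is injective on $\mathcal{F}_{\operatorname{fin}}(\mathcal{D})$ (it has the explicit algebraic inverse $K^{-1}$ displayed after \eqref{Kf=F}), $Kf\equiv 0$ gives $f=0$. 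This establishes \eqref{positive_functional}.

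For the growth condition, I would take a compact $\Lambda\subset X$ and compute $\nu(\Gamma_\Lambda^{(n)})=\lambda(\Gamma_\Lambda^{(n)})$ directly from \eqref{B(Gamma_0)}: only the $n$-th summand survives when the Borel set is $\Gamma^{(n)}_\Lambda$ itself, giving $\lambda(\Gamma_\Lambda^{(n)})=\frac{1}{n!}\sigma^{(n)}(\Gamda^{(n)}_\Lambda)=\frac{1}{n!}\sigma(\Lambda)^n$ (since $\sigma$ is non-atomic, the diagonal in $\Lambda^n$ has $\sigma^{(n)}$-measure zero, so $\sigma^{(n)}(\Gamma^{(n)}_\Lambda)=\sigma^{\widehat{\otimes}n}(\Lambda^n)=\sigma(\Lambda)^n$). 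Then $\nu(\Gamma^{(n)}_\Lambda)=\sigma(\Lambda)^n/n!\le \sigma(\Lambda)^n\le C_\Lambda^{\,n}$ with $C_\Lambda:=\max(1,\sigma(\Lambda))$, and $\sigma(\Lambda)<\infty$ because $\sigma$ is Borel and $\Lambda$ is compact (indeed $\sigma(\Lambda)\le\sigma(Y)<\infty$ for any $Y\in\mathcal{B}_c(X)$ containing $\Lambda$). For $n=0$ the value is $\sigma^{(0)}(\varnothing)=1\le C_\Lambda^{\,0}$. This gives \eqref{nu_leq_c}.

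The main obstacle is the positivity step, and within it the only delicate point is the passage from "$(Kf)=0$ $\pi$-a.e." to "$(Kf)\equiv 0$": this is exactly where Lemma~\ref{lemma_Kf_continuous} and Lemma~\ref{lemma_Poisson_measure_positive} must be combined, and one should be slightly careful that $\Gamma(X)$ with the ordinary‑vague topology has the property that the support of $\pi$ (which contains every vague‑open set by Lemma~\ref{lemma_Poisson_measure_positive}) is all of $\Gamma(X)$, so no nonzero continuous function can be supported off a $\pi$-null set. Everything else — the multiplicativity of $K$, its commuting with conjugation, its injectivity, and the Lebesgue–Poisson/Poisson identity — is quoted verbatim from the results already in the excerpt, so the argument is short once this topological density point is made precise.
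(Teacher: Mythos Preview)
Your proposal is correct and follows essentially the same route as the paper's proof: for positivity, the paper also rewrites $s(f\star\overline f)$ via Proposition~\ref{th_LP_P} and \eqref{A_multiplication} as $\int_\Gamma |(Kf)(\gamma)|^2\,d\pi(\gamma)$, then combines Lemma~\ref{lemma_Kf_continuous} with Lemma~\ref{lemma_Poisson_measure_positive} to conclude $Kf\equiv 0$, and finishes with the invertibility of $K$; for the growth bound it likewise reads off $\lambda(\Gamma^{(n)}_\Lambda)$ from \eqref{B(Gamma_0)} and observes the series $\sum_n \lambda(\Gamma^{(n)}_\Lambda)=\lambda(\Gamma_0(\Lambda))=e^{\sigma(\Lambda)}<\infty$, from which \eqref{nu_leq_c} follows. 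The only cosmetic difference is that the paper inserts the step $\pi(\Gamma_0)=0$ (Lemma~\ref{lemma_Poisson_measure_zero}) to restrict the integral to $\Gamma\setminus\Gamma_0$ before invoking continuity, whereas you argue directly on all of $\Gamma$; your version is slightly cleaner and the extra step in the paper is not actually needed.
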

\begin{proof}
  Let $f\in\mathcal{A}=\mathcal{F}_{\operatorname{fin}}(\mathcal{D})$
  and $s(f\star \overline{f})=0$.  Then it is necessary to prove that
  $f=0$.

  Denote $g=f\star \overline{f}\in
  \mathcal{F}_{\operatorname{fin}}(\mathcal{D})$.  Using
  \eqref{A_multiplication} we conclude that
\begin{equation}\label{th_positive_functional_proof_1}
(Kg)(\gamma)=(K(f\star \overline{f}))(\gamma)=(Kf)(\gamma)(K\overline{f})(\gamma)=\left|(Kf)(\gamma)\right|^2,
\quad \gamma\in\Gamma.
\end{equation}

We apply Proposition~\ref{th_LP_P} and the corresponding equality
\eqref{LP_P} to a vector $g\in
\mathcal{F}_{\operatorname{fin}}(\mathcal{D})$.  We get, using
\eqref{s_as_integral} with the measure $d\lambda(\xi)$ and
\eqref{th_positive_functional_proof_1} that
\begin{equation}\label{th_positive_functional_proof_2}
\begin{split}
s(f\star \overline{f})=s(g) &
=\int_{\Gamma_0}g(\xi)d\lambda(\xi)=
\int_{\Gamma}(Kg)(\gamma)d\pi(\gamma)
\\
& =\int_{\Gamma}\left|(Kf)(\gamma)\right|^2d\pi(\gamma)=
\int_{\Gamma\setminus\Gamma_0}\left|(Kf)(\gamma)\right|^2d\pi(\gamma).
\end{split}
\end{equation}
Here $\lambda(\xi)$ is the Lebesgue-Poisson measure on the Borel
$\sigma$-algebra of the space $\Gamma_0$ with the ordinary topology
and $d\pi(\gamma)$ is a Poisson measure on the Borel $\sigma$-algebra
of the space $\Gamma$ topologized by the ordinary-vague topology (see
\eqref{Gamma_2}).  Since $\pi(\Gamma_0)=0$, we have
\eqref{th_positive_functional_proof_2}.

Let $f\in \mathcal{F}_{\operatorname{fin}}(\mathcal{D})$ be such that
$s(f\star \overline{f})=0$.  Then we conclude from
\eqref{th_positive_functional_proof_2} that
\begin{equation}\label{Kf(gamma)=0}
(Kf)(\gamma)=0
\end{equation}
for almost all $\gamma\in\Gamma\setminus\Gamma_0$ with respect to a
Poisson measure on $\Gamma\setminus\Gamma_0$.  According to
Lemma~\ref{lemma_Kf_continuous}, the function $(Kf)(\gamma)$,
$\gamma\in\Gamma$, is continuous with respect to the vague topology.
On other hand, the Poisson measure is positive on open sets from
$\Gamma$ in the vague topology
(Lemma~\ref{lemma_Poisson_measure_positive}).  Therefore the equality
\eqref{Kf(gamma)=0} means that $(Kf)(\gamma)$ is equal to zero for
every $\gamma\in\Gamma\setminus\Gamma_0$.  But the transform $K$ has
an algebraically inverse operator $K^{-1}$ (see \ref{Kf=F}), hence
$f=0$.

Pass to the second part of the Theorem.  The Lebesgue-Poisson measure
on the space $\Gamma_0$ is defined by the series (see \eqref{B(Gamma_0)})
\begin{equation}\label{LP_measure_by_series}
\mathcal{B}(\Gamma_0)\ni\alpha\mapsto
\sum_{n=0}^\infty\frac{1}{n!}\sigma^{(n)}(\alpha^{(n)})=\lambda(\alpha),
\end{equation}
where $\mathcal{B}(\Gamma_0)$ is the $\sigma$-algebra of Borel sets in
the ordinary topology $\Gamma_0$ and $\sigma^{(n)}(\alpha^{(n)})$ are
values of the symmetric tensor product $m^{\widehat{\otimes}n}$ of the
measure $m$ on $X$ on the set $\alpha^{(n)}:=\alpha\cap\Gamma^{(n)}$,
$n\in\mathbb{N}_0$ (see \eqref{disjunct_summands}).

For the bounded function $\Gamma_0\ni\xi\mapsto f(\xi)\in\mathbb{C}$,
measurable with respect to $\mathcal{B}(\Gamma_0)$, we have (see
\eqref{int_f_on_Gamma_0}) that
\begin{equation}\label{th_positive_functional_proof_3}
\begin{split}
\int_{\Gamma_0}f(\xi)d\lambda(\xi)=
    &\sum_{n=0}^\infty\frac{1}{n!}
    \int_{\Gamma^{(n)}}f_n(x_1,\dots,x_n)d\sigma^{(n)}(x_1,\dots,x_n),\\
&\qquad f_n(x_1,\dots,x_n)=f(\xi)\upharpoonright\Gamma^{(n)}.
\end{split}
\end{equation}

Let $\Lambda\subset X$ be an arbitrary compact set. Then, similarly to
\eqref{disjunct_summands}, \eqref{th_positive_functional_proof_3}, we
have for a bounded measurable function $f(\xi)$,
$\xi\in\Gamma_0(\Lambda)$, that
\begin{equation}\label{th_positive_functional_proof_4}
\begin{split}
&\Gamma_0(\Lambda)=\bigsqcup_{n=0}^\infty\Gamma^{(n)}_\Lambda,\\
& \int_{\Gamma_0(\Lambda)}|f(\xi)|d\lambda(\xi)=
\sum_{n=0}^\infty\frac{1}{n!}
    \int_{\Gamma^{(n)}_\Lambda}|f_n(x_1,\dots,x_n)|d\sigma^{(n)}(x_1,\dots,x_n).
\end{split}
\end{equation}

In particular, for $f(\xi)=1$, $\xi\in\Gamma_0(\Lambda)$, we have
\begin{equation}\label{th_positive_functional_proof_5}
\sum_{n=0}^\infty\frac{1}{n!}\,\lambda(\Gamma^{(n)}_\Lambda)=\lambda(\Gamma_0(\Lambda))<\infty.
\end{equation}
From \eqref{th_positive_functional_proof_5} and \eqref{B(Gamma_0)}
we easily conclude that there exists a certain constant
$C_\Lambda>0$ such that for every $n\in\mathbb{N}_0$,
$\lambda(\Gamma^{(n)}_\Lambda)<C^n_\Lambda$. Thus, the estimate
\eqref{nu_leq_c} is proved.
\end{proof}

For us it is necessary to repeat some main results of the spectral
theory for a family $(\tilde{A}(\varphi))_{\varphi\in \mathcal{D}}$ of
commuting selfadjoint operators on the space $\mathcal{H}_s$, see
\cite[Theorem~5.3, Condition~3.5]{Berez-Tesko-16} and estimate
\eqref{th_positive_functional_proof_5}.

\begin{proposition}\label{prop_A(phi)selfadjoint}
  Let the conditions \eqref{positive_functional} and \eqref{nu_leq_c}
  for the functional $s$ of the form \eqref{s_as_integral} be
  fulfilled.  Then the operators $\tilde{A}(\varphi)$ of the family
  $(\tilde{A}(\varphi))_{\varphi\in \mathcal{D}}$ are selfadjoint in
  the space $\mathcal{H}_s$ and commuting.  This family generates a
  Fourier transform $I$ of the following form:
\begin{equation}\label{I}
\begin{split}
\mathcal{F}_{\operatorname{fin}}(\mathcal{D})\ni
f=(f_n)_{n=0}^\infty\mapsto (If)(\omega) & =:
\widehat{f}(\omega)= (f,P(\omega))_{\mathcal{F}(H)}
\\
& =
\sum_{n=0}^\infty(f_n,P_n(\omega))_{\mathcal{F}_n(H)}\in
L^2(\mathcal{D}',d\rho(\omega)).
\end{split}
\end{equation}

Here $\rho$ is the spectral measure of the family, being a probability
Borel measure on the space $\mathcal{D}'$ of generalized functions
$\omega$ with weak topology, i.~e., on the $\sigma$-algebra
$\mathcal{B}(\mathcal{D}')$.  The closure $\widetilde{I}$ by
continuity of the operator $I$ is a unitary operator between the spaces
$\mathcal{H}_s$ and $L^2(\mathcal{D}',d\rho(\omega))$.  It maps each
operator $\tilde{A}(\varphi)$ into an operator of multiplication by
the function $\left\langle\omega,\varphi\right\rangle$.
\end{proposition}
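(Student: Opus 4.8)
The plan is to invoke the general spectral theory for families of commuting selfadjoint operators generated by a quasi-analytic vector, as developed in \cite{Berez-Tesko-16} (cf.\ also \cite{Berez-03,Berez-Mierz-07,Berez-Kondr-95}), and simply check that the present hypotheses feed into that machinery. First I would observe that by Theorem~\ref{th_positive_functional} the functional $s$ given by the Lebesgue-Poisson measure via \eqref{s_as_integral} is positive, so the Hilbert space $\mathcal{H}_s$ is the genuine completion of $\mathcal{A}=\mathcal{F}_{\operatorname{fin}}(\mathcal{D})$ under the scalar product \eqref{quasiscalar_product}, with no nontrivial null classes. Moreover Theorem~\ref{th_positive_functional} also supplies the growth bound \eqref{nu_leq_c}, i.e.\ $\nu(\Gamma^{(n)}_\Lambda)\le C_\Lambda^n$ for every compact $\Lambda$. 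These are exactly the two standing assumptions \eqref{positive_functional} and \eqref{nu_leq_c} that Proposition~\ref{prop_A(phi)selfadjoint} requires, so there is strictly speaking nothing left to prove beyond this verification --- but I would spell out why each ingredient of the conclusion holds.

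Next I would recall the construction of the operators $A(\varphi)$, $\varphi\in\mathcal{D}$, by \eqref{A_phi}: multiplication by $\varphi$ in the $\star$-algebra. Since $\star$ is commutative and the involution is the pointwise conjugation $f\mapsto\overline f$, each $A(\varphi)$ is symmetric on the dense domain $\mathcal{F}_{\operatorname{fin}}(\mathcal{D})$, and the operators commute pairwise on that domain. The essential selfadjointness of each closure $\tilde A(\varphi)$ follows from a quasi-analyticity/total-set-of-analytic-vectors argument of Nussbaum type: estimate $\|A(\varphi)^n e\|_{\mathcal{H}_s}$ using \eqref{nu_leq_c}, which gives $\|A(\varphi)^n e\|^2_{\mathcal H_s}=s\bigl(\varphi^{\star n}\star\overline{\varphi^{\star n}}\bigr)$ controlled by a Carleman-type series that diverges, so that $e=(1,0,0,\dots)$ (and then a total set of vectors obtained from it) is a quasi-analytic vector for the whole family. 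This is precisely the content cited from \cite[Theorem~5.3, Condition~3.5]{Berez-Tesko-16}, and I would reference it rather than reprove it. Joint commutativity of the selfadjoint closures then follows from commutativity on a common quasi-analytic domain.

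Having a commuting family of selfadjoint operators on a separable Hilbert space with a cyclic vector, the projection spectral theorem in the rigging $\mathcal{F}_{\operatorname{fin}}(\mathcal{D})\subset\mathcal{H}_s\subset\mathcal{F}_{\operatorname{fin}}(\mathcal{D})'$ (a nuclear rigging by \eqref{F_fin_pr_lim} and the Hilbert--Schmidt embeddings noted after it) produces a probability Borel measure $\rho$ on $\mathcal{D}'$ and a generalized joint eigenvector field $\omega\mapsto P(\omega)=(P_n(\omega))_{n=0}^\infty$, with $P_n(\omega)\in\mathcal{F}_n(H)'$, such that the Fourier transform $I$ defined by \eqref{I}, $f\mapsto\widehat f(\omega)=(f,P(\omega))$, extends to a unitary $\widetilde I:\mathcal{H}_s\to L^2(\mathcal{D}',d\rho)$ diagonalizing the family. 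The joint-eigenvector equation $A(\varphi)^{+}P(\omega)=\langle\omega,\varphi\rangle P(\omega)$ is what turns $\tilde A(\varphi)$ into multiplication by $\langle\omega,\varphi\rangle$ under $\widetilde I$; this is the standard computation $(\widetilde I\,\tilde A(\varphi)f)(\omega)=(A(\varphi)f,P(\omega))=(f,A(\varphi)^{+}P(\omega))=\langle\omega,\varphi\rangle\widehat f(\omega)$. I expect the only real subtlety --- hence the step to treat with care --- to be the quasi-analyticity estimate underlying essential selfadjointness and joint commutativity of the closures; everything downstream (existence of $\rho$, the explicit form of $I$, diagonalization) is a direct application of the projection spectral theorem for nuclear riggings. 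Accordingly the proof is short: cite Theorem~\ref{th_positive_functional} for \eqref{positive_functional} and \eqref{nu_leq_c}, then invoke \cite[Theorem~5.3]{Berez-Tesko-16} to conclude.
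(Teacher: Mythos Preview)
Your approach is essentially the same as the paper's: the proposition is not proved from scratch here but is a restatement of \cite[Theorem~5.3, Condition~3.5]{Berez-Tesko-16}, with the quasi-nuclear rigging built from the spaces \eqref{F_norm}, and the paper simply cites that reference together with \cite{Berez-Mierz-07,Berez-03,Berez-Kondr-95}. Your sketch of the underlying mechanism (quasi-analyticity from the growth bound, projection spectral theorem in a nuclear rigging, diagonalization) is accurate and matches what those references do.

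One logical slip: you open by invoking Theorem~\ref{th_positive_functional} to supply \eqref{positive_functional} and \eqref{nu_leq_c}, but those conditions are \emph{hypotheses} of the proposition, not conclusions to be verified. Proposition~\ref{prop_A(phi)selfadjoint} is stated for a general measure $\nu$ in \eqref{s_as_integral} satisfying the two conditions; Theorem~\ref{th_positive_functional} is the separate result that the Lebesgue--Poisson measure happens to satisfy them. So the first paragraph of your proposal is superfluous (and mildly circular in the paper's logical order). Drop it and start directly from the assumed conditions; the rest stands.
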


In \eqref{I}, $P(\omega)=(P_n(\omega))_{n=0}^\infty$, where the functions
$\mathcal{D}'\ni\omega\mapsto
P_n(\omega)\in\left(\mathcal{D}^{\widehat{\otimes}n}\right)'$,
$n\in\mathbb{N}_0$, are similar to polynomials of the first kind in the
classical moment problem; $P_0(\omega)=1$, $\omega\in \mathcal{D}'$.
They satisfy the following equality:
\begin{equation}\label{P(w)}
(P(\omega),A(\varphi)f)_{\mathcal{F}(H)}=\left\langle\omega,\varphi\right\rangle(P(\omega),f)_{\mathcal{F}(H)},
\quad \varphi\in \mathcal{D},  \quad \omega\in \mathcal{D}', \quad
f\in \mathcal{F}_{\operatorname{fin}}(\mathcal{D}).
\end{equation}
Here $\mathcal{F}(H)$ is the usual symmetric Fock space, constructed
from the space $$H=L^2(X,dm(x)),$$ i.~e.,
\[ \mathcal{F}(H)=\bigoplus_{n=0}^\infty \mathcal{F}_n(H). \]

The equality \eqref{P(w)} means that $P(\omega)$ is a joint
generalized eigenvector for the family
$(\tilde{A}(\varphi))_{\varphi\in \mathcal{D}}$ of the operators
$\tilde{A}(\varphi)$ with the eigenvalue
$\left\langle\omega,\varphi\right\rangle$.

Note that, to prove Proposition \ref{prop_A(phi)selfadjoint}, i.~e.,
Theorem 5.3 from \cite{Berez-Tesko-16}, it is necessary to construct
some quasi-nuclear rigging of the space $\mathcal{H}_s$.  This rigging
is constructed by means of spaces \eqref{F_norm}, for details see in
\cite{Berez-Tesko-16,Berez-Mierz-07,Berez-03} and the book
\cite{Berez-Kondr-95}.

Let us pass to some results from \cite{Berez-Tesko-16,Berez-Mierz-07},
that are more deeply connected with a Poisson measure as the spectral
measure.  At first, we will give some results that are stated in the
article \cite[Theorem 4.1]{Berez-Mierz-07} (see also \cite[Section
6]{Berez-Tesko-16}).
\begin{proposition}\label{prop_e}
For any $\omega\in \mathcal{D}'$, consider the function
\begin{equation}\label{e}
e^{\left\langle\omega,\log(1+\varphi)\right\rangle},
\end{equation}
where $\varphi\in \mathcal{D}$ and $\varphi(x)>-1$, $x\in X$.
This function can be decomposed into a series in tensor powers $\varphi^{\otimes n}$
in the following way:
\begin{equation}\label{e_series}
e^{\left\langle\omega,\log(1+\varphi)\right\rangle}=
\sum_{n=0}^\infty\left(\varphi^{\otimes n},P_n(\omega)\right)_{\mathcal{F}_n(H)},
\end{equation}
where the coefficients of this decomposition are just $P_n(\omega)$
from  \eqref{P(w)}.
\end{proposition}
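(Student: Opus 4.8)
The plan is to identify the generating function \eqref{e} with the spectral-theoretic object $(K f)(\gamma)$ for a suitable character $f = \chi_\varphi$, and then read off the coefficients of $\varphi^{\otimes n}$ from the Fourier transform \eqref{I}. First I would recall from \eqref{character_chi} that $\chi_\varphi(\xi) = \prod_{x\in\xi}\varphi(x)$ and compute the $K$-transform of a character. A direct computation from \eqref{K-trans} gives
\[
(K\chi_\varphi)(\gamma) = \sum_{\xi\subset\gamma}\prod_{x\in\xi}\varphi(x) = \prod_{x\in\gamma}(1+\varphi(x)),
\]
the product being finite since $\varphi$ has compact support. For $\gamma = \omega_\gamma \in \mathcal{D}'$ this is exactly $e^{\langle\gamma,\log(1+\varphi)\rangle}$, because $\langle\gamma,\log(1+\varphi)\rangle = \sum_{x\in\gamma}\log(1+\varphi(x))$ and exponentiating turns the sum into the product; here the hypothesis $\varphi(x)>-1$ is what makes $\log(1+\varphi)$ well-defined and a legitimate element of $\mathcal{D}$ (or at least a finite continuous function to which the pairing extends).

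Next I would connect $K\chi_\varphi$ to the eigenvector expansion. The key structural fact is \eqref{P(w)}: $P(\omega)$ is a joint generalized eigenvector with eigenvalue $\langle\omega,\varphi\rangle$, and the Fourier transform \eqref{I} expands $f\in\mathcal{F}_{\operatorname{fin}}(\mathcal{D})$ as $\widehat f(\omega) = \sum_n (f_n, P_n(\omega))_{\mathcal{F}_n(H)}$. The character $\chi_\varphi$, viewed as a vector in the Fock picture, has $n$-th component proportional to $\varphi^{\otimes n}$ (with the factorial normalization coming from \eqref{f_g_convolution}/\eqref{B(Gamma_0)}); one should check the precise constant, but after identification it yields $\widehat{\chi_\varphi}(\omega) = \sum_{n=0}^\infty (\varphi^{\otimes n}, P_n(\omega))_{\mathcal{F}_n(H)}$. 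On the other hand, under $\widetilde I$ the operator $\tilde A(\varphi)$ becomes multiplication by $\langle\omega,\varphi\rangle$, and the algebra homomorphism property \eqref{A_multiplication} together with \eqref{character_convolution} shows that $\widehat{\chi_\varphi}$ is precisely the image of the function $(K\chi_\varphi)(\gamma) = \prod_{x}(1+\varphi(x))$ under the unitary identification of $\mathcal{H}_s$ (which by Theorem \ref{th_positive_functional} and Proposition \ref{th_LP_P} is an $L^2$ space over a Poisson measure) with $L^2(\mathcal{D}',d\rho)$. Combining the two evaluations of $\widehat{\chi_\varphi}(\omega)$ gives \eqref{e_series}.

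The routine part is bookkeeping: tracking the $1/n!$ factors relating $\chi_\varphi$ as a symmetric function on $X^n$ to $\varphi^{\otimes n}\in\mathcal{F}_n(H)$, and checking that the series converges in the relevant topology (which follows from $\varphi\in\mathcal{D}$, the nuclearity in \eqref{F_fin_pr_lim}, and the estimate \eqref{th_positive_functional_proof_5}). The main obstacle I anticipate is justifying that $\log(1+\varphi)$ and the resulting pairing $\langle\omega,\log(1+\varphi)\rangle$ are controlled for $\rho$-almost every $\omega$ — i.e., that the formal manipulation $\sum_{x\in\gamma}\log(1+\varphi(x))$ is absolutely convergent and the exponential identity holds $\rho$-a.e.; this is where the compact support of $\varphi$ (hence of $\log(1+\varphi)$) and the local finiteness built into \eqref{Gamma} are essential. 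Once the pointwise identity $e^{\langle\gamma,\log(1+\varphi)\rangle} = (K\chi_\varphi)(\gamma)$ is established on the support of the spectral measure, the series expansion is just the Fourier transform \eqref{I} applied to $\chi_\varphi$, and the identification of coefficients with $P_n(\omega)$ from \eqref{P(w)} is automatic.
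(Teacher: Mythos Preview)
The paper does not actually prove this proposition: it is quoted verbatim from \cite[Theorem~4.1]{Berez-Mierz-07} (see the sentence preceding the statement), so there is no ``paper's own proof'' to compare against. What matters, then, is whether your argument stands on its own and whether it fits the logical order of Section~3.

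Your computation $(K\chi_\varphi)(\gamma)=\prod_{x\in\gamma}(1+\varphi(x))=e^{\langle\gamma,\log(1+\varphi)\rangle}$ is correct, and the identification $(I f)(\gamma)=(Kf)(\gamma)$ for $f\in\mathcal{F}_{\operatorname{fin}}(\mathcal{D})$ and $\gamma\in\Gamma$ is exactly equation~\eqref{th_A(phi)_spectral_representation_proof_3} (from \cite[Lemma~6.3]{Berez-Tesko-16}). But this route has two genuine gaps. First, the proposition is asserted for \emph{every} $\omega\in\mathcal{D}'$, whereas your chain of identifications only yields the equality for $\omega=\gamma\in\Gamma$; you never explain how to pass from $\Gamma$ to general $\omega$. (One can close this by observing that both sides are, degree by degree in $\varphi$, weakly continuous polynomial functions of $\omega$, and that $\Gamma_0$ is dense in $\mathcal{D}'$---but that step is missing.) Second, and more seriously in this paper, Proposition~\ref{prop_e} is a statement about the polynomials $P_n(\omega)$ determined solely by the recurrence~\eqref{P(w)}; it does not depend on any particular choice of $s$. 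Your argument, by contrast, invokes Theorem~\ref{th_positive_functional}, Proposition~\ref{th_LP_P}, and the interpretation of $\mathcal{H}_s$ as an $L^2$ space over a Poisson measure---all of which live in the Lebesgue--Poisson setting and, in the paper's logical flow, are used \emph{after} Proposition~\ref{prop_e} to derive Theorem~\ref{th_A(phi)_spectral_representation}. Building the proof of Proposition~\ref{prop_e} on the spectral measure being supported in $\Gamma$ (or being Poisson) is circular here: that is precisely what Proposition~\ref{prop_e} is being used to establish.

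A self-contained proof (as in \cite{Berez-Mierz-07}) proceeds directly from the recurrence~\eqref{P(w)}: one checks that the generating function $G(\varphi,\omega)=e^{\langle\omega,\log(1+\varphi)\rangle}$ satisfies the same three-term-type relation in $\varphi$ that~\eqref{P(w)} imposes on $\sum_n(\varphi^{\otimes n},P_n(\omega))$, together with $P_0(\omega)=1$, and then identifies coefficients. No measure, no $K$-transform, and no spectral representation are needed; this is what makes the result available for arbitrary $\omega\in\mathcal{D}'$ and usable upstream of Theorem~\ref{th_A(phi)_spectral_representation}.
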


Let $\psi\in \mathcal{D}$ be arbitrary. Then the function
$e^{\psi(x)}-1$ belongs to $\mathcal{D}$ and its values are greater
than $-1$.  Therefore, we can take this function to be $\varphi(x)$ in
the expression \eqref{e}.  Thus, it is possible to write, for $\psi\in
\mathcal{D}$,
\begin{equation}\label{e^psi}
X\ni x\mapsto\varphi(x)=e^{\psi(x)}-1\in \mathcal{D}; \quad
\varphi(x)>-1, \quad x\in X; \quad \psi\in \mathcal{D} \text{ is
arbitrary}.
\end{equation}

Using this change \eqref{e^psi} of the function $\varphi$ to $\psi$,
we can rewrite the equality \eqref{e_series} in the form
\begin{equation}\label{e_series_psi}
e^{\left\langle \omega,\psi\right\rangle}=
\sum_{n=0}^\infty\left(\left(e^{\psi}-1\right)^{\otimes
n},P_n(\omega)\right)_{\mathcal{F}_n(H)}, \quad\psi\in
\mathcal{D},\quad  \omega\in \mathcal{D}'.
\end{equation}

Let $\varphi\in \mathcal{D}$ be arbitrary.  Recall that we have
introduced the notion of a character $\chi_\varphi$, $\varphi\in
\mathcal{D}$, by means of the identity \eqref{character_chi}.  This
definition is of type \eqref{f}, i.~e., we are given some function on
$\Gamma_0$.  But such a function can be given as a sequence of type
\eqref{f_in F_fin}, instead of \eqref{f}.  So, we have the following
definition of the character $\chi_\varphi$:
\begin{equation}\label{character_chi_prod}
\begin{split}
\chi_\varphi&(\xi)=\prod_{x\in\xi}\varphi(x), \quad
\xi\in\Gamma_0\setminus\varnothing;
\quad\chi_\varphi(\varnothing)=1. \\ & \vrule height 1.5ex depth
0pt \, \vrule height 1.5ex depth
0pt
\\
(1,\varphi,&\varphi^{\otimes 2},\dots,\varphi^{\otimes n},\dots); \quad \varphi\in \mathcal{D}.
\end{split}
\end{equation}

Therefore, the right-hand side of the equality \eqref{e_series} can be
understood as the right-hand side of the equality \eqref{I} with
$f_n=\varphi^{\otimes n}$.  Thus, if we prove that, in the case of the
Hilbert space $\mathcal{H}_s$ (constructed from the functional $s$ of
the form \eqref{s_as_integral} with Lebesgue-Poisson measure
$d\nu(\xi)=d\lambda(\xi)$) the vector \eqref{character_chi_prod}
$\chi_\varphi$ belongs to the space $\mathcal{H}_s$, then it is
possible to understand \eqref{I} as the Fourier transform
$\widetilde{I}$ of a vector $\chi_\varphi\in \mathcal{H}_s$.  We will
prove this actually simple fact.

\begin{lemma}\label{lemma_chi_in_H_s}
  Let a functional $s$ have the form \eqref{LP_measure_by_series} with
  the Lebesgue-Poisson measure $d\nu(\xi)=d\lambda(\xi)$.  Then an
  arbitrary character $\chi_\varphi$, $\varphi\in \mathcal{D}$,
  belongs to the space $\mathcal{H}_s$.
\end{lemma}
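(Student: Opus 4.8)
The plan is to exhibit $\chi_\varphi$ as the limit in $\mathcal{H}_s$ of its natural truncations. For $N\in\mathbb{N}$ set
\[
\chi_\varphi^{(N)}=\bigl(1,\varphi,\varphi^{\otimes 2},\dots,\varphi^{\otimes N},0,0,\dots\bigr)\in\mathcal{F}_{\operatorname{fin}}(\mathcal{D})=\mathcal{A};
\]
viewed as a function on $\Gamma_0$ (cf.\ \eqref{character_chi_prod}) this is $\chi_\varphi^{(N)}(\xi)=\chi_\varphi(\xi)$ for $|\xi|\le N$ and $0$ otherwise, so $\chi_\varphi^{(N)}(\xi)\to\chi_\varphi(\xi)$ for each fixed $\xi\in\Gamma_0$ (the sequence stabilizes pointwise). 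It is then enough to check that $(\chi_\varphi^{(N)})_N$ is Cauchy in $\mathcal{H}_s$; its limit lies in $\mathcal{H}_s$ by construction, and since on $\Gamma_0$ it is the pointwise limit $\chi_\varphi$, we identify it with the character $\chi_\varphi$.

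First I would transport the computation into $L^2(\Gamma,d\pi)$ by means of the $K$-transform. Exactly as in the proof of Theorem~\ref{th_positive_functional} (see \eqref{th_positive_functional_proof_2}), for every $f\in\mathcal{A}$ one has, by \eqref{A_multiplication} and Proposition~\ref{th_LP_P},
\[
\|f\|_{\mathcal{H}_s}^2=s(f\star\overline f)=\int_{\Gamma_0}(f\star\overline f)(\xi)\,d\lambda(\xi)=\int_{\Gamma}\bigl|(Kf)(\gamma)\bigr|^2\,d\pi(\gamma),
\]
so $f\mapsto Kf$ is isometric from $(\mathcal{A},\|\cdot\|_{\mathcal{H}_s})$ into $L^2(\Gamma,d\pi)$. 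Hence it suffices to show that $\bigl(K\chi_\varphi^{(N)}\bigr)_N$ converges in $L^2(\Gamma,d\pi)$.

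Then I would study $K\chi_\varphi^{(N)}$ directly. By \eqref{K-trans},
\[
(K\chi_\varphi^{(N)})(\gamma)=\sum_{\xi\subset\gamma,\ |\xi|\le N}\ \prod_{x\in\xi}\varphi(x),\qquad\gamma\in\Gamma .
\]
Since $\varphi\in\mathcal{D}$ has compact support, for each fixed $\gamma$ only finitely many points of $\gamma$ lie in $\operatorname{supp}\varphi$, so the sum stabilizes for large $N$ at $\prod_{x\in\gamma}(1+\varphi(x))$; thus $K\chi_\varphi^{(N)}$ converges pointwise on $\Gamma$, and moreover
\[
\bigl|(K\chi_\varphi^{(N)})(\gamma)\bigr|\le\prod_{x\in\gamma}\bigl(1+|\varphi(x)|\bigr)=:g(\gamma),\qquad N\in\mathbb{N}.
\]
Picking $h\in\mathcal{D}$ with compact support and $h\ge|\varphi|$ on $X$ (possible since $|\varphi|$ is continuous with compact support), we get $g(\gamma)\le e^{\langle\gamma,h\rangle}$, whence by Theorem~\ref{th_Laplace_transform}
\[
\int_{\Gamma}g(\gamma)^2\,d\pi(\gamma)\le\int_{\Gamma}e^{\langle\gamma,2h\rangle}\,d\pi(\gamma)=\exp\Bigl(\int_X\bigl(e^{2h(x)}-1\bigr)\,d\sigma(x)\Bigr)<\infty ,
\]
the integral being finite because $h$ has compact support and $\sigma$ is finite on compact sets. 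So $g\in L^2(\Gamma,d\pi)$, and dominated convergence yields that $K\chi_\varphi^{(N)}$ converges in $L^2(\Gamma,d\pi)$ (to the function $\gamma\mapsto\prod_{x\in\gamma}(1+\varphi(x))$); in particular the sequence is Cauchy there, hence $(\chi_\varphi^{(N)})_N$ is Cauchy in $\mathcal{H}_s$, which completes the argument.

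I expect the only real obstacle to be the passage from pointwise to $L^2(\Gamma,d\pi)$ convergence, i.e.\ the construction of an integrable majorant; this is precisely where the Laplace-transform formula \eqref{Laplace_transform} for the Poisson measure and the compact support of $\varphi$ are used. The isometry through $K$ and the identification of the abstract $\mathcal{H}_s$-limit with the function $\chi_\varphi$ are routine.
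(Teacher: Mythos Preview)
Your proof is correct. Both you and the paper approximate $\chi_\varphi$ by its finite truncations (the paper calls them ``subcharacters'' $\chi_{\varphi,\operatorname{sub};k}$), but the way the Cauchy property is established is genuinely different. The paper stays on $\Gamma_0$: it uses the character identity \eqref{character_convolution}, $\chi_\varphi\star\chi_\varphi=\chi_{2\varphi+\varphi^2}=:\chi_\theta$, and then computes $\int_{\Gamma_0}|\chi_\theta|\,d\lambda$ directly from the series definition \eqref{B(Gamma_0)} of the Lebesgue--Poisson measure, obtaining the explicit bound $\exp\bigl(\int_\Lambda|\theta|\,d\sigma\bigr)$; the convergence of the truncations follows by comparing partial sums of this series. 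Your route instead passes through the isometry $f\mapsto Kf$ into $L^2(\Gamma,d\pi)$ and replaces the series estimate by a dominated-convergence argument, with the majorant controlled via the Laplace-transform formula \eqref{Laplace_transform}. The paper's approach is more self-contained (it uses only $\lambda$ and the algebraic identity for characters), while yours exploits the Poisson-side machinery already assembled in Section~\ref{sect_pois_measure_classic}; in return you get a cleaner convergence argument and, as a byproduct, an explicit identification of the image $K\chi_\varphi$ with $\gamma\mapsto\prod_{x\in\gamma}(1+\varphi(x))$.
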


\begin{proof}
  Using the equality \eqref{character_convolution} for the character
  \eqref{character_chi_prod} we get
\begin{equation}\label{lemma_chi_in_H_s_proof_1}
(\chi_\varphi\star \chi_\varphi)(\xi)=\chi_{2\varphi+\varphi^2}(\xi), \quad \xi\in\Gamma_0; \quad
2\varphi(x)+\varphi^2(x)=:\theta(x), \quad x\in X,\quad  \theta\in \mathcal{D}.
\end{equation}
Denote by $\Lambda\subset X$ the compact set, for which $\theta(x)=0$,
$x\in X\setminus\Lambda$. Then similarly to
\eqref{th_positive_functional_proof_4} we have with some $c\in
(0,\infty)$ that
\begin{equation}\label{lemma_chi_in_H_s_proof_2}
\begin{split}
q(\chi_\varphi) &
:=\int_{\Gamma_0}|\chi_\theta(\xi)|d\lambda(\xi)=
\int_{\Gamma_0(\Lambda)}|\chi_\theta(\xi)|d\lambda(\xi)
\\
& =\sum_{n=0}^\infty\frac{1}{n!}
    \int_{\Gamma^{(n)}_\Lambda}|\theta(x_1),\dots,\theta(x_n))|d\sigma^{(n)}(x_1,\dots,x_n)
\\
&
=\sum_{n=0}^\infty\frac{1}{n!}\left(\int_{\Lambda}|\theta(x)|d\sigma(x)\right)^n
<c<\infty.
\end{split}
\end{equation}

Introduce the notion of a subcharacter, $\chi_{\varphi,\operatorname{sub};k}(\xi)$, $k\in\mathbb{N}_0$:
instead of \eqref{character_chi_prod} we put:
\begin{equation}\label{subcharacter_chi_prod}
\chi_{\varphi,\operatorname{sub};k}(\xi)=
(1,\varphi,\varphi^{\otimes 2},\dots,\varphi^{\otimes k},0,0,\dots)\in \mathcal{F}_{\operatorname{fin}}(\mathcal{D}),
\quad k\in\mathbb{N}_0.
\end{equation}
For subcharacters, the formula \eqref{lemma_chi_in_H_s_proof_2}
$q(\chi_{\varphi,\operatorname{sub};k})$ has the form
\eqref{lemma_chi_in_H_s_proof_2}, but the summation is carried out up
to $k$.  Of course,
\begin{equation}\label{lim_q}
\lim_{k\rightarrow\infty}q(\chi_{\varphi,\operatorname{sub};k})=q(\chi_\varphi),\quad
q(\chi_{\varphi,\operatorname{sub};k})\leq q(\chi_{\varphi,sub;k+1})\leq c.
\end{equation}

We have, for $\varphi\in \mathcal{D}$ (see \eqref{lemma_chi_in_H_s_proof_1}), that
\begin{equation}\label{chi_norm}
\begin{split}
\|\chi_\varphi\|_{\mathcal{H}_s}^2 & = s(\chi_\varphi\star
\overline{\chi_\varphi})= \int_{\Gamma_0}(\chi_\varphi\star
\chi_\varphi)d\lambda(\xi)
\\
&  =\int_{\Gamma_0}\chi_\theta(\xi)d\lambda(\xi)\leq
\int_{\Gamma_0}|\chi_\theta(\xi)|d\lambda(\xi)=
q(\chi_\varphi)
\end{split}
\end{equation}
and a similar identity for
$\chi_{\varphi,\operatorname{sub};k}(\xi)$. From \eqref{lim_q},
\eqref{chi_norm} we conclude that, in the space $\mathcal{H}_s$,
$\lim_{k\rightarrow\infty}\chi_{\varphi,\operatorname{sub};k}=\chi_\varphi$.
But $\chi_{\varphi,\operatorname{sub};k}\in
\mathcal{F}_{\operatorname{fin}}(\mathcal{D})$, therefore,
$\chi_\varphi\in \mathcal{H}_s$.
\end{proof}

Let us repeat that we will apply Proposition~\ref{prop_e}, the main
result of the spectral theory developed in
\cite{Berez-Tesko-16,Berez-Mierz-07,Berez-03,Berez-Kondr-95}.
The corresponding functional $s$ has the form
\begin{equation}\label{s_e}
s(f)=\int_{\Gamma_0}f(\xi)d\lambda(\xi), \quad
f\in\mathcal{A}=\mathcal{F}_{\operatorname{fin}}(\mathcal{D}),
\end{equation}
where $d\lambda(\xi)$ is the Lebesgue-Poisson measure on $\Gamma_0$.

Consider the series \eqref{I} (see also \eqref{e^psi}),
\begin{equation}\label{e_F_trans}
\begin{split}
&\sum_{n=0}^\infty((e^\psi-1)^{\otimes n},
P_n(\omega))_{\mathcal{F}_n(H)}=
\sum_{n=0}^\infty(\varphi^{\otimes n},
P_n(\omega))_{\mathcal{F}_n(H)}, \quad \psi\in \mathcal{D}, \quad
\omega\in \mathcal{D}';
\\ &\qquad \varphi(x)=e^{\psi(x)}-1\in
\mathcal{D}, \quad \varphi(x)>-1, \quad x\in X.
\end{split}
\end{equation}

As follows from Lemma~\ref{lemma_chi_in_H_s}, $\chi_\varphi\in H_s$
and, therefore, the series \eqref{e_F_trans} can be regarded as the
Fourier transform \eqref{I} $(\widetilde{I}\chi_\varphi)(\omega)$ of
the function $e^{\langle \omega,\psi\rangle}$ with a fixed $\omega\in
\mathcal{D}'$, see \eqref{e_series_psi}.

Consider the spectral measure $d\rho(\omega)$, $\omega\in
\mathcal{D}'$ of the family $(\tilde{A}(\varphi))_{\varphi\in
  \mathcal{D}}$ of the operators $\tilde{A}(\varphi)$, which is
defined on the $\sigma$-algebra $\mathcal{B}(\mathcal{D}')$ of Borel
sets in the weak topology. Using \eqref{e_F_trans} we can write
\begin{equation}\label{e_F_trans_2}
\begin{split}
&0\leq
e^{\langle\omega,\psi\rangle}=\sum_{n=0}^\infty(\varphi^{\otimes
n},P_n(\omega))_{\mathcal{F}_n(H)}=
(\widetilde{I}\chi_\varphi)(\omega),\quad \psi\in
\mathcal{D},\quad \omega\in \mathcal{D}'; \\ &\qquad
\qquad\varphi(x)=e^{\psi(x)}-1, \quad x\in X.
\end{split}
\end{equation}

Integrate the equality \eqref{e_F_trans_2} with respect to $\omega\in
\mathcal{D}'$ in measure $d\rho(\omega)$.  We get
\begin{equation}\label{integration_e_by_d_rho}
\int_{\mathcal{D}'}e^{\langle\omega,\psi\rangle}d\rho(\omega)=
\int_{\mathcal{D}'}(\widetilde{I}\chi_\varphi)(\omega)d\rho(\omega)\qquad \varphi\in \mathcal{D}.
\end{equation}
Formally, such an integral can be equal to $+\infty$: we integrate a
non-negative measurable function \eqref{e_F_trans_2} with respect to a
positive finite measure.  But we now prove that the integral
\eqref{integration_e_by_d_rho} is equal to
$s(\chi_\varphi)\in(0,+\infty)$.

It is easy to prove the following general fact.
\begin{lemma}\label{lemma_s_in_(0,+infty)}
  Let the conditions \eqref{nonnegative_functional} and positivity
  \eqref{positive_functional} be fulfilled for the functional $s(f)$,
  $f\in\mathcal{A}=\mathcal{F}_{\operatorname{fin}}(\mathcal{D})$.
  Therefore it is possible to introduce the space $\mathcal{H}_s$.
  Then the functional $s$ is continuous with respect to the norm of
  space $\mathcal{H}_s$ and it is possible to extend it to the whole
  space $\mathcal{H}_s$.
\end{lemma}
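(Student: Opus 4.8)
The plan is to exploit the fact that the convolution algebra $\mathcal{A}=\mathcal{F}_{\operatorname{fin}}(\mathcal{D})$ possesses a unit $e=(1,0,0,\dots)$ for $\star$ which is moreover invariant under the involution, $\overline{e}=e$. Starting from the definition \eqref{quasiscalar_product} of the scalar product, one has, for every $f\in\mathcal{A}$,
\begin{equation*}
(f,e)_{\mathcal{H}_s}=s(f\star\overline{e})=s(f\star e)=s(f),
\end{equation*}
since $f\star e=f$. Thus, on $\mathcal{A}$, the functional $s$ is nothing but the $\mathcal{H}_s$-inner product against the fixed vector $e\in\mathcal{A}$.

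The main estimate is then a single application of the Cauchy--Schwarz inequality, which is legitimate because under \eqref{positive_functional} the quasi-scalar product \eqref{quasiscalar_product} is a genuine scalar product: for all $f\in\mathcal{A}$,
\begin{equation*}
|s(f)|=|(f,e)_{\mathcal{H}_s}|\leq\|e\|_{\mathcal{H}_s}\,\|f\|_{\mathcal{H}_s},
\end{equation*}
where $\|e\|_{\mathcal{H}_s}^2=s(e\star\overline{e})=s(e)$ is finite (because $e\in\mathcal{A}$ and $s$ is a functional on $\mathcal{A}$) and in fact strictly positive (otherwise $e$ would be identified with $0$, contradicting \eqref{positive_functional}). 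Hence $s$ is a bounded linear functional on $\mathcal{A}$ with respect to $\|\cdot\|_{\mathcal{H}_s}$, of norm at most $\|e\|_{\mathcal{H}_s}$.

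To finish, recall that $\mathcal{H}_s$ is, by construction, the completion of $\mathcal{A}$ in the norm $\|\cdot\|_{\mathcal{H}_s}$, so $\mathcal{A}$ is dense in $\mathcal{H}_s$; a bounded linear functional on a dense subspace of a Hilbert space extends uniquely by continuity to the whole space (with the same norm), which is exactly the asserted extension of $s$. Equivalently, the extension is $\mathcal{H}_s\ni F\mapsto(F,e)_{\mathcal{H}_s}$. I do not expect a genuine obstacle here; the only point requiring care is the identification $s(f)=(f,e)_{\mathcal{H}_s}$, which relies on $e$ being simultaneously the $\star$-unit and fixed by the involution --- both recorded in the discussion following \eqref{f_g_convolution}.
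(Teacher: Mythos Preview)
Your argument is correct and is essentially the same as the paper's: you identify $s(f)=(f,e)_{\mathcal{H}_s}$ via $\overline{e}=e$ and $f\star e=f$, then apply Cauchy--Schwarz to bound $|s(f)|\le\|e\|_{\mathcal{H}_s}\|f\|_{\mathcal{H}_s}$, which is exactly what the paper does (it writes the Cauchy--Bunyakovski inequality for $(f,g)_{\mathcal{H}_s}$ and then sets $g=e$). Your write-up is in fact slightly more complete, since you spell out the density/extension step that the paper leaves implicit.
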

\begin{proof}
  Using the Cauchy-Bunyakovski inequality, for $f,g\in\mathcal{A}$, we
  can write
\begin{equation}\label{CB_inequality}
|s(f\star \overline{g})|^2=|(f,g)_{\mathcal{H}_s}|^2\leq \|f\|_{\mathcal{H}_s}\|g\|_{\mathcal{H}_s}.
\end{equation}
Let $g=e$, where $e$ is the unit element of the algebra $\mathcal{A}$.
Then from \eqref{CB_inequality} we have
\[|s(f)|^2=|s(f\star \overline{e})|^2\leq \|f\|_{\mathcal{H}_s}\|e\|_{\mathcal{H}_s} \leq C\|f\|_{\mathcal{H}_s},
\quad C=\|e\|_{\mathcal{H}_s}.\]
\end{proof}
\begin{lemma}\label{lemma_scalar_prod_in_H_s}
The following equality is true:
\begin{equation}\label{scalar_prod_in_H_s}   
s(f)=\int_{\mathcal{D}'}(\widetilde{I}f)(\omega)d\rho(\omega),\quad
f\in \mathcal{H}_s.
\end{equation}
\end{lemma}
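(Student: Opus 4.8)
The plan is to establish the identity first on the dense subspace $\mathcal{A}=\mathcal{F}_{\operatorname{fin}}(\mathcal{D})$ and then to extend it to the whole space $\mathcal{H}_s$ by continuity. For $f\in\mathcal{A}$ I would use that $e=(1,0,0,\dots)$ is the unit of the algebra $\mathcal{A}$ and that $\overline{e}=e$, so that, by the definition \eqref{quasiscalar_product} of the scalar product,
\[ s(f)=s(f\star e)=s(f\star\overline{e})=(f,e)_{\mathcal{H}_s}. \]
By Proposition~\ref{prop_A(phi)selfadjoint} the closure $\widetilde{I}$ is a unitary operator from $\mathcal{H}_s$ onto $L^2(\mathcal{D}',d\rho(\omega))$, so the right-hand side equals $(\widetilde{I}f,\widetilde{I}e)_{L^2(\mathcal{D}',d\rho)}$.

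The second step is to identify $\widetilde{I}e$. Substituting $f=e$ into the definition \eqref{I} of the Fourier transform and recalling that $P_0(\omega)=1$, all summands with $n\geq 1$ vanish and one is left with $(\widetilde{I}e)(\omega)=(1,P_0(\omega))_{\mathcal{F}_0(H)}=1$ for every $\omega\in\mathcal{D}'$; that is, $\widetilde{I}e$ is the constant function $1$. Since $\rho$ is a probability (in particular finite) measure, $\widetilde{I}f\in L^2(\mathcal{D}',d\rho)\subset L^1(\mathcal{D}',d\rho)$ and all integrals are meaningful, and one obtains
\[ s(f)=(f,e)_{\mathcal{H}_s}=(\widetilde{I}f,\widetilde{I}e)_{L^2(\mathcal{D}',d\rho)}=\int_{\mathcal{D}'}(\widetilde{I}f)(\omega)\,d\rho(\omega), \]
which is precisely \eqref{scalar_prod_in_H_s} for $f\in\mathcal{A}$.

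Finally, I would pass to arbitrary $f\in\mathcal{H}_s$. By Lemma~\ref{lemma_s_in_(0,+infty)} the functional $f\mapsto s(f)$ is continuous on $\mathcal{H}_s$; the map $f\mapsto\int_{\mathcal{D}'}(\widetilde{I}f)(\omega)\,d\rho(\omega)$ is continuous as well, since by the Cauchy--Bunyakovski inequality and the unitarity of $\widetilde{I}$ one has $\left|\int_{\mathcal{D}'}(\widetilde{I}f)(\omega)\,d\rho(\omega)\right|\leq\|\widetilde{I}f\|_{L^2(\mathcal{D}',d\rho)}\,\|1\|_{L^2(\mathcal{D}',d\rho)}=\|f\|_{\mathcal{H}_s}$. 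As $\mathcal{A}$ is dense in $\mathcal{H}_s$ and the two continuous linear functionals coincide on $\mathcal{A}$, they coincide on all of $\mathcal{H}_s$. I do not expect a genuine obstacle here; the only points that need care are the computation $\widetilde{I}e\equiv 1$ (which rests on $P_0(\omega)=1$) and the observation that it is exactly the normalization $\rho(\mathcal{D}')=1$ that turns the scalar product against $\widetilde{I}e$ into the bare integral appearing in \eqref{scalar_prod_in_H_s}.
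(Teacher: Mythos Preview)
Your proof is correct and follows essentially the same route as the paper: both use the unitarity of $\widetilde{I}$ to write $(f,e)_{\mathcal{H}_s}=\int_{\mathcal{D}'}(\widetilde{I}f)(\omega)\,d\rho(\omega)$, identify $(f,e)_{\mathcal{H}_s}$ with $s(f)$ on $\mathcal{A}$ via \eqref{quasiscalar_product}, and then invoke Lemma~\ref{lemma_s_in_(0,+infty)} to extend to all of $\mathcal{H}_s$. The only difference is cosmetic: the paper leaves the identification $\widetilde{I}e\equiv 1$ implicit, whereas you spell it out via $P_0(\omega)=1$.
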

\begin{proof}
  Using the Proposition~\ref{prop_e} we can assert that the operator
  $\widetilde{I}$ is a unitary operator between the spaces
  $\mathcal{H}_s$ and $L^2(\mathcal{D}',d\rho(\omega))$.  Thus
\begin{equation}\label{lemma_scalar_prod_in_H_s_proof_1}
(f,g)_{\mathcal{H}_s}=\int_{\mathcal{D}'}(\widetilde{I}f)(\omega)\overline{(\widetilde{I}g)(\omega)}d\rho(\omega),
\quad f,g\in \mathcal{H}_s.
\end{equation}
Let $g=e$ in \eqref{lemma_scalar_prod_in_H_s_proof_1},
i.~e., $e=(1,0,0,\dots)$ or, in the form of a function on $\Gamma_0\ni\xi$,
$e(\xi)=1$ if $\xi=\varnothing$ and $0$ for other $\xi$.  We then have,
instead of \eqref{lemma_scalar_prod_in_H_s_proof_1}, that
\begin{equation}\label{lemma_scalar_prod_in_H_s_proof_2}
(f,e)_{\mathcal{H}_s}=\int_{\mathcal{D}'}(\widetilde{I}f)(\omega)d\rho(\omega),\quad
f\in \mathcal{H}_s.
\end{equation}

If $f\in \mathcal{F}_{\operatorname{fin}}(\mathcal{D})$, then
$(f,e)_{\mathcal{H}_s}=s(f\star \overline{e})=s(f)$.  But using
Lemma~\ref{lemma_s_in_(0,+infty)} we can assert that the functional
$s$ is continuous also on the space $\mathcal{H}_s\supset
\mathcal{F}_{\operatorname{fin}}(\mathcal{D})$.  Thus we can write
$(f,e)_{\mathcal{H}_s}=s(f)$ also for $f\in \mathcal{H}_s$.  Then the
equality \eqref{lemma_scalar_prod_in_H_s_proof_2} gives
\eqref{scalar_prod_in_H_s}.
\end{proof}

Let us return to the equality \eqref{integration_e_by_d_rho}.  Since
$\forall\varphi\in \mathcal{D}$, the character $\chi_\varphi\in
\mathcal{H}_s$ (Lem\-ma~\ref{lemma_chi_in_H_s}), according to
Lemma~\ref{lemma_scalar_prod_in_H_s} we have the following
\emph{essential equality}:
\begin{equation}\label{integration_e_by_d_rho=s}
\int_{\mathcal{D}'}e^{\langle\omega,\psi\rangle}d\rho(\omega)=
\int_{\mathcal{D}'}(\widetilde{I}\chi_\varphi)(\omega)d\rho(\omega)=s(\chi_\varphi),
\quad \varphi\in \mathcal{D}.
\end{equation}

\begin{theorem}\label{th_A(phi)_spectral_representation}
  Consider the family $(\tilde{A}(\varphi))_{\varphi\in \mathcal{D}}$
  of commuting selfadjoint operators $\tilde{A}(\varphi)$ of the
  form \eqref{A_phi} on the space $\mathcal{H}_s$.  This space is
  constructed using the functional $s$ of the form
  \eqref{quasiscalar_product} and \eqref{s_e}, where $d\lambda(\xi)$
  is a Lebesgue-Poisson measure.  These operators $\tilde{A}(\varphi)$
  are indeed selfadjoint and commuting.

  The corresponding spectral representation has the form \eqref{I},
  where $P(\omega)$ is defined by the equation \eqref{P(w)}.  The
  spectral measure $d\rho(\omega)$ is a non-negative measure on the
  space $\mathcal{D}'$ and is given on sets of the Borel
  $\sigma$-algebra constructed with respect to the weak topology on
  $\mathcal{D}'$.

  This measure $d\rho(\omega)$ is Poisson in the following sense: its
  Laplace transform has the form
\begin{equation}\label{d_rho_F-trans}
\int_{\mathcal{D}'}e^{\langle\omega,f\rangle}d\rho(\omega)=
\exp\left(\int_{X}(e^{f(x)}-1)d\sigma(x)\right), \quad f\in\mathcal{D},
\end{equation}
where $d\sigma(x)$ is the initial measure on $X$.
\end{theorem}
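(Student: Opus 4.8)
The plan is to assemble the proof from the pieces already established in the excerpt, with the only real work being the identification of the Laplace transform. First I would recall that Theorem~\ref{th_positive_functional} guarantees that the functional $s$ of the form \eqref{s_e} with $d\nu(\xi)=d\lambda(\xi)$ satisfies both the positivity condition \eqref{positive_functional} and the growth estimate \eqref{nu_leq_c}; consequently Proposition~\ref{prop_A(phi)selfadjoint} applies and yields directly that the operators $\tilde{A}(\varphi)$, $\varphi\in\mathcal{D}$, are selfadjoint and commuting, that the Fourier transform $\widetilde{I}$ of \eqref{I} is unitary from $\mathcal{H}_s$ onto $L^2(\mathcal{D}',d\rho(\omega))$, and that $P(\omega)$ is the joint generalized eigenvector satisfying \eqref{P(w)}. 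This takes care of all assertions of the theorem except the explicit Laplace-transform formula \eqref{d_rho_F-trans}.

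For the Laplace transform, the key is the \emph{essential equality} \eqref{integration_e_by_d_rho=s}, which was obtained by combining Lemma~\ref{lemma_chi_in_H_s} (so that $\chi_\varphi\in\mathcal{H}_s$ for every $\varphi\in\mathcal{D}$) with Lemma~\ref{lemma_scalar_prod_in_H_s} (which computes $s(f)$ as $\int_{\mathcal{D}'}(\widetilde{I}f)(\omega)\,d\rho(\omega)$). Thus for $\psi\in\mathcal{D}$, putting $\varphi=e^{\psi}-1\in\mathcal{D}$ as in \eqref{e^psi}, we have
\[
\int_{\mathcal{D}'}e^{\langle\omega,\psi\rangle}\,d\rho(\omega)=s(\chi_\varphi).
\]
So it remains only to evaluate $s(\chi_\varphi)$ using the integral representation \eqref{s_e} of $s$ against the Lebesgue-Poisson measure. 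By the definition \eqref{character_chi_prod} of the character and the expansion \eqref{int_f_on_Gamma_0} of the integral over $\Gamma_0$ with respect to $d\lambda(\xi)$ (with $\sigma^{(0)}(\varnothing)=1$), one gets
\[
s(\chi_\varphi)=\int_{\Gamma_0}\chi_\varphi(\xi)\,d\lambda(\xi)
=\sum_{n=0}^\infty\frac{1}{n!}\Bigl(\int_X\varphi(x)\,d\sigma(x)\Bigr)^n
=\exp\Bigl(\int_X\varphi(x)\,d\sigma(x)\Bigr)=\exp\Bigl(\int_X(e^{\psi(x)}-1)\,d\sigma(x)\Bigr),
\]
which is exactly \eqref{d_rho_F-trans} after renaming $\psi$ to $f$. (Here the interchange of sum and integral and the convergence of the series are justified exactly as in the estimate \eqref{lemma_chi_in_H_s_proof_2}, since $\varphi$ is a finite smooth function, so $\int_X|\varphi|\,d\sigma<\infty$.)

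The main obstacle, such as it is, is not any single hard estimate but rather making sure the chain of identifications is airtight: that the series \eqref{e_series_psi} representing $e^{\langle\omega,\psi\rangle}$ really coincides pointwise (for $\rho$-a.e.\ $\omega$) with $(\widetilde{I}\chi_\varphi)(\omega)$, which is where Proposition~\ref{prop_e} and the identification $f_n=\varphi^{\otimes n}$ in \eqref{character_chi_prod} enter, and that the non-negativity in \eqref{e_F_trans_2} legitimizes integrating term-by-term even though a priori the integral might be $+\infty$ — this is precisely why one first shows via \eqref{integration_e_by_d_rho=s} that the value is the finite number $s(\chi_\varphi)$. Once these links are in place, the conclusion \eqref{d_rho_F-trans} follows, and in particular $\rho$ is a Poisson measure in the sense of \eqref{Poisson_measure} / Theorem~\ref{th_Laplace_transform}, with the same intensity measure $\sigma$. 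Finally, I would remark (as the surrounding text does) that since $s\ne0$ and $\mathcal{H}_s$ is built from $\mathcal{A}$ with unit $e$, the measure $\rho$ is genuinely a probability measure after normalization, and $\rho(\Gamma_0(X))>0$ in contrast to the classical Poisson measure $\pi$ of Lemma~\ref{lemma_Poisson_measure_zero}.
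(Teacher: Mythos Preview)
Your argument is correct, but it takes a shorter route than the paper's own proof. Both proofs start from the \emph{essential equality} \eqref{integration_e_by_d_rho=s}, so the task is reduced to computing $s(\chi_\varphi)$ with $\varphi=e^\psi-1$. You do this directly: expand $\int_{\Gamma_0}\chi_\varphi(\xi)\,d\lambda(\xi)$ via \eqref{int_f_on_Gamma_0}, recognize the exponential series, and read off $\exp\bigl(\int_X\varphi\,d\sigma\bigr)$. The paper instead passes through the classical Poisson measure $\pi$: it restricts $\omega$ to $\gamma\in\Gamma$, uses the subcharacters $\chi_{\varphi,\operatorname{sub};k}$ together with the identity $(\widetilde I f)(\gamma)=(Kf)(\gamma)$ for $f\in\mathcal F_{\operatorname{fin}}(\mathcal D)$ and the relation \eqref{LP_P} to show $s(\chi_\varphi)=\int_\Gamma e^{\langle\gamma,\psi\rangle}\,d\pi(\gamma)$, and then invokes the already-established Laplace transform \eqref{Laplace_transform} of $\pi$. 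Your approach is more elementary and self-contained (it does not need the $K$-transform, Proposition~\ref{th_LP_P}, or Theorem~\ref{th_Laplace_transform}); the paper's approach, on the other hand, makes the conceptual link $\rho\leftrightarrow\pi$ explicit by showing that both measures have Laplace transform equal to the same quantity $s(\chi_\varphi)$, which ties in with the discussion of \eqref{pi=0}--\eqref{rho>0} that follows.
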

\begin{proof}
Consider equality \eqref{e_F_trans_2}.
In this equality $\omega$ is a linear functional $f$ from $\mathcal{D}'$.
In particular, we can take $\omega$ to be
$\gamma\in\Gamma$, where $\gamma=[x_1,x_2,\dots]$, $x_m\in X$ and
\begin{equation}\label{gamma,psi}
\langle\gamma,\psi\rangle:=\omega_\gamma(\psi)=\sum_{m=1}^\infty\psi(x_m), \quad\psi\in \mathcal{D}.
\end{equation}
I.~e., we identify, as usual, $\gamma$ with $\sum_{m=1}^\infty\delta_{x_m}$
(see \eqref{w_gamma}).

As a result we have, using \eqref{gamma,psi}, for the character
$\chi_\varphi$,
\begin{equation}\label{th_A(phi)_spectral_representation_proof_1}
0\leq e^{\langle\gamma,\psi\rangle}=\sum_{n=0}^\infty(\varphi^{\otimes n},P_n(\gamma))_{\mathcal{F}_n(H)}=
(\widetilde{I}\chi_\varphi)(\gamma),
\end{equation}
where $ \psi\in \mathcal{D}$ and $\varphi(x)=e^{\psi(x)}-1$, $x\in X.$

Consider in \eqref{th_A(phi)_spectral_representation_proof_1}, instead
of $\chi_\varphi$, the corresponding subcharacter
\eqref{subcharacter_chi_prod}
$\chi_{\varphi,\operatorname{sub};k}(\xi)\in
\mathcal{F}_{\operatorname{fin}}(\mathcal{D})$.  We can write
\eqref{th_A(phi)_spectral_representation_proof_1} in the form
\begin{equation}\label{th_A(phi)_spectral_representation_proof_2}
\begin{split}
0\leq
e^{\langle\gamma,\psi\rangle}=\sum_{n=0}^\infty(\varphi^{\otimes
n},P_n(\gamma))_{\mathcal{F}_n(H)} & =
\lim_{k\rightarrow\infty}\sum_{n=0}^k(\varphi^{\otimes
n},P_n(\gamma))_{\mathcal{F}_n(H)}
\\
&
=\lim_{k\rightarrow\infty}(\widetilde{I}\chi_{\varphi,\operatorname{sub};k})(\gamma)=
\lim_{k\rightarrow\infty}(K\chi_{\varphi,\operatorname{sub};k})(\gamma).
\end{split}
\end{equation}
We have used in \eqref{th_A(phi)_spectral_representation_proof_2} the inclusion
$\chi_{\varphi,\operatorname{sub};k}\in \mathcal{F}_{\operatorname{fin}}(\mathcal{D})$ and the following equality
for $f\in \mathcal{F}_{\operatorname{fin}}(\mathcal{D})$:
\begin{equation}\label{th_A(phi)_spectral_representation_proof_3}
(\widetilde{I}f)(\gamma)=(If)(\gamma)=(Kf)(\gamma)
\end{equation}
(the equality \eqref{th_A(phi)_spectral_representation_proof_3}
follows from \cite[Lemma~6.3]{Berez-Tesko-16}).

Consider the connection \eqref{LP_P} between the Lebesgue-Poisson measure
$d\lambda(\xi)$ and the Poisson measure $d\pi(\gamma)$,
\begin{equation}\label{d_lambda_and d_pi}
s(f)=\int_{\Gamma_0}f(\xi)d\lambda(\xi)=\int_\Gamma (Kf)(\gamma)d\pi(\gamma),
\end{equation}
where $f\in \mathcal{F}_{\operatorname{fin}}(\mathcal{D})$.

Integrating \eqref{th_A(phi)_spectral_representation_proof_3} with
respect to $\gamma\in\Gamma$ in the measure $d\pi(\gamma)$ and using
\eqref{d_lambda_and d_pi} we get
\begin{equation}\label{th_A(phi)_spectral_representation_proof_4}
\int_\Gamma e^{\langle\gamma,\psi\rangle}d\pi(\gamma)=
\lim_{k\rightarrow\infty}\int_\Gamma(K\chi_{\varphi,\operatorname{sub};k})(\gamma)d\pi(\gamma)=
\lim_{k\rightarrow\infty}s(\chi_{\varphi,\operatorname{sub};k})=s(\chi_\varphi).
\end{equation}
It is easy to prove that one can pass to the limit under the integral.
The last limit exists since $\chi_\varphi\in \mathcal{H}_s$ (see
Lemma~\ref{lemma_s_in_(0,+infty)}).

For the classical Poisson measure $d\pi(\gamma)$ we have the general
equality (see \eqref{Laplace_transform}):
\begin{equation}\label{th_A(phi)_spectral_representation_proof_5}
\int_\Gamma e^{\langle\gamma,\psi\rangle}d\pi(\gamma)=
\exp\left(\int_X(e^{\psi(x)}-1)d\sigma(x)\right), \quad \psi \in \mathcal{D}.
\end{equation}
From equalities \eqref{integration_e_by_d_rho=s},
\eqref{th_A(phi)_spectral_representation_proof_4} and
\eqref{th_A(phi)_spectral_representation_proof_5} we conclude that
\[ 
\pushQED{\qed}
\int_{\mathcal{D}'}e^{\langle\omega,f\rangle}d\rho(\omega)=
\exp\left(\int_{X}(e^{f(x)}-1)d\sigma(x)\right), \quad \psi \in \mathcal{D}.
\qedhere
\popQED \]
	\renewcommand{\qed}{}	
\end{proof}

Let us come back to the question the arose in article
\cite{Berez-Tesko-16}, at the end of Section~2.  It is known, that the
Poisson measure $\pi$ constructed in a classical way from a
Lebesgue-Poisson measure by means of a Kolmogorov-type theorem has the
following property:
\begin{equation}\label{pi=0}
\pi(\Gamma_0(X))=0
\end{equation}
(see Lemma~\ref{lemma_Poisson_measure_zero}).

On the other hand, the spectral measure $\rho$ of the family $(\tilde
A(\varphi))_{\varphi\in\mathcal D}$ of commuting selfadjoint operators
can be such that
\begin{equation}\label{rho>0}
\rho(\Gamma_0(X))>0,
\end{equation}
where $\Gamma_0(X)$ is a Borel set in the weak topology of the space
$\mathcal{D}'$.  Moreover, property \eqref{rho>0} is used in the
constructions of article \cite{Berez-Tesko-16} (see Theorems 2.10,
6.6), which are essential for Propositions
\ref{prop_A(phi)selfadjoint}, \ref{prop_e}.

We have proved that there is such a family $(\tilde
A(\varphi))_{\varphi\in\mathcal D}$ for which the spectral
measure $\rho$ is equal to the Poisson $\pi$.  Therefore we have some
contradiction with \eqref{rho>0} and \eqref{pi=0}.

But, indeed, this is no contradiction, since our spectral measure is
Poissonian only in the sense of
Theorem~\ref{th_A(phi)_spectral_representation}, i.~e., only by
definition \eqref{Poisson_measure}.

Let us explain the situation in more details.

Definition \eqref{Poisson_measure} is based on the assertion that the
Laplace transform \eqref{Poisson_measure} is defines uniquely a
measure on $\mathcal{D}'$, when this measure is given on some fixed
$\sigma$-algebra on $\mathcal{D}'$ (uniquely up to sets of measure
zero).  But in our case the situation is different.  We define $\pi$
and $\rho$ on $\Gamma(X)$ with a different topology and, therefore, in
principle, on different Borel $\sigma$-algebras.

Namely in \eqref{pi=0} the measure $\pi$ is defined on $\Gamma(X)$
with the ordinary-vague topology \eqref{Gamma_2}, i.~e., on $\Gamma_0(X)$
in the ordinary topology, and on $\Gamma(X)\setminus\Gamma_0(X)$ in the vague
topology (i.~e., the relative topology as on the part
$\Gamma(X)\setminus\Gamma_0(X)$ of $\Gamma(X)$ with the weak topology on
$\mathcal{D}'=\left(C_{\rm fin}^\infty(X)\right)'$).

In \eqref{rho>0} we have another topology on $\Gamma(X)$; this is the
weak topology on $\mathcal{D}'$ with the inclusion
$\Gamma(X)\subset\mathcal{D}'$.  Such type of the topology is
convenient in spectral theory, see \cite{Berez-Tesko-16},
\cite{Berez-Kondr-95}.

We ca also say that a similar situation about \eqref{rho>0} and
\eqref{pi=0} is in the work \cite{Berez-00}, where the spectral
measure of special Jacobi fields may be Poissonian in the sense of
definition \eqref{Poisson_measure}.


\end{document}